\newtheorem{theorem}{Theorem}
\newtheorem*{theorem*}{Theorem}
\newtheorem{lemma}{Lemma}
\newtheorem*{lemma*}{Lemma}
\newtheorem{corollary}{Corollary}
\newtheorem{proposition}{Proposition}
\theoremstyle{definition}
\newtheorem{definition}{Definition}
\theoremstyle{remark}
\newtheorem{remark}{Remark}
\newtheorem*{remark*}{Remark}
\newcommand{\N}{\mathbb{N}}
\newcommand{\Z}{\mathbb{Z}}
\newcommand{\Zn}{\mathbb{Z}_n}
\title{Finite Abelian Groups with Toroidal Subgroup Lattices}
\author{Richard A. Moy}
\address{Lee University, 1120 N. Ocoee St., Cleveland, TN 37311}
\begin{document}

\begin{abstract}
In this paper, we determine the genus of the subgroup lattice of several families of abelian groups. In doing so, we classify all finite abelian groups whose subgroup lattices can be embedded into the torus. 
\end{abstract}

\maketitle

\section{Introduction}

In every introductory abstract algebra course, students draw subgroup lattices, and they quickly learn that most subgroup lattices cannot be drawn without crossing edges. However, the groups whose subgroup lattice can be drawn on the plane without crossing edges were determined in \cite{ST04} and \cite{BR06}. Their results naturally lead us to the question of which groups have subgroup lattices that can be drawn on the $1$-torus without crossing edges. In \cite{BMS12}, the REU participants determined which cyclic groups have subgroup lattice that embed into the $1$-torus. In this paper, we classify the abelian groups whose subgroup lattice graphs are genus one. Following the notation in \cite{L20}, we formally define the subgroup lattice graph.

\begin{definition}
    The subgroup lattice graph $\Gamma(G)$ of a finite group $G$ is a graph whose vertices are the subgroups of the group and two vertices, $H_1$ and $H_2$ are connected by an edge if an only if $H_1\le H_2$ and there is no subgroup $K$ such that $H_1<K<H_2$. 
\end{definition}

\begin{definition}
The genus $\gamma(\Gamma)$ of a graph $\Gamma$ is the minimum $g$ such that there exists an embedding of $\Gamma$ into the orientable surface $S_g$ of genus $g$.
\end{definition}

Let $\Zn$ denote the cyclic group of size $n$. We now state the main result of this paper.

\begin{theorem}\label{theorem:main}
If $G$ is a finite abelian group, then $\gamma(\Gamma(G))=1$ if and only if
\begin{itemize}
    \item $G=\Zn$ where $n=p_1^{e_1}p_2^{e_2}p_3$ where $p_1,p_2,p_3$ are distinct primes and $(e_1,e_2)\in \{(2,2),(3,2),(3,3) \}$, or
    \item $G=\Zn$ where $n=p_1p_2p_3p_4$ where $p_1,p_2,p_3,p_4$ are distinct primes, or
    \item $G\in\{\Z_4\times \Z_4,\Z_8\times \Z_8,\Z_2\times \Z_2\times \Z_q,\Z_4\times \Z_2\times \Z_q \}$ where $q\ne 2$ is prime, or
    \item $G\in\{\Z_9\times \Z_9, \Z_3\times \Z_3\times \Z_q \}$ where $q\ne 3$ is prime, or
    \item $G=\Z_{25}\times \Z_{25}$.
\end{itemize}
\end{theorem}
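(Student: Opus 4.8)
The plan is to use the fact that the subgroup lattice of a finite abelian group factors over its Sylow subgroups, $L(G)\cong\prod_p L(G_p)$, so that the Hasse diagram is a Cartesian product of graphs, $\Gamma(G)\cong \Gamma(G_{p_1})\,\square\,\cdots\,\square\,\Gamma(G_{p_k})$, where a cover in a product poset changes exactly one coordinate. Two tools then drive the whole argument. First, $L(G)$ is graded by $\Omega(|H|)$, so $\Gamma(G)$ is bipartite; hence every embedding in $S_g$ has all faces of length $\ge 4$, and Euler's formula yields the bound $\gamma(\Gamma(G))\ge \tfrac{E}{4}-\tfrac{V}{2}+1$. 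Second, for $A\le B\le G$ the interval $[A,B]$ in $L(G)$ is the subgroup lattice of $B/A$, and its Hasse diagram is an \emph{induced} subgraph of $\Gamma(G)$; thus $\gamma$ is monotone under passing to subquotients. This monotonicity lets me promote a single forbidden group of genus $\ge 2$ to an entire family at once.

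First I would prune the candidates. Since $\Gamma(\Z_2\times\Z_2\times\Z_2)$ has $V=16$, $E=35$ and hence $\gamma\ge \tfrac{35}{4}-8+1=\tfrac74>1$ (and the counts only grow for larger $p$), monotonicity forces rank at most $2$ at every prime. A second Euler estimate on $\Gamma(\Z_p\times\Z_p)\,\square\,\Gamma(\Z_q\times\Z_q)=K_{2,p+1}\,\square\,K_{2,q+1}$, which already gives $\gamma\ge 4$ when $p=q=2$, shows at most one prime can have rank $2$. Therefore every non-cyclic candidate has the shape $G=(\Z_{p^a}\times\Z_{p^b})\times\Z_m$ with $\gcd(m,p)=1$ and $\Z_m$ cyclic. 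The purely cyclic groups are settled in \cite{BMS12}, and the planar abelian groups are classified in \cite{ST04,BR06}; so the problem reduces to separating, among the finitely many non-planar groups of the above shape, those of genus exactly one from those of genus at least two.

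The heart of the proof is this finite determination for the rank-two building blocks and their products with a single chain. For the upper bounds I would exhibit explicit toroidal embeddings of $\Gamma(G)$ for each listed group: these are routine for $\Z_4\times\Z_4$, $\Z_9\times\Z_9$, $\Z_2\times\Z_2\times\Z_q$ and $\Z_4\times\Z_2\times\Z_q$, and progressively more involved for the $37$-vertex graph of $\Z_8\times\Z_8$ and the $45$-vertex graph of $\Z_{25}\times\Z_{25}$, each accompanied by a Kuratowski subdivision certifying non-planarity. For the lower bounds I must show that the next groups up, namely $\Z_{16}\times\Z_{16}$, $\Z_{27}\times\Z_{27}$, $\Z_{49}\times\Z_{49}$, $\Z_5\times\Z_5\times\Z_q$, $\Z_4\times\Z_4\times\Z_q$, and $\Z_2\times\Z_2\times\Z_q\times\Z_r$, all have genus at least two; by monotonicity this then eliminates every remaining group.

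The main obstacle is precisely these lower bounds. A direct computation shows that for the entire family $\Z_{p^2}\times\Z_{p^2}$ one has $V=p^2+3p+5$ and $E=2(p+1)(p+2)$, so the bipartite Euler bound collapses to the useless value $\gamma\ge-\tfrac12$ for \emph{every} $p$; the same degeneracy, caused by average degree near $4$, afflicts $\Z_{p^n}\times\Z_{p^n}$ and the prisms $\Gamma(G_p)\,\square\,\Gamma(\Z_q)$. Hence Euler's formula cannot distinguish the toroidal cases from the genus-two cases near the boundary, and since no two consecutive layers of the lattice are dense enough to contain a large complete bipartite subgraph, the obstruction must be found globally. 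I expect to handle these cases by extracting from the interior of the lattice an explicit subdivision of a graph known to be non-toroidal, such as $K_{4,5}$, $K_{5,5}$, or $K_8$ (all of genus at least two), or by invoking additivity of genus across the low-order near-cut vertices where the trivial and full subgroups behave like apices. Arranging that such an obstruction appears exactly past the thresholds $(p,a)=(5,2),(3,3),(2,4)$ for $\Z_{p^a}\times\Z_{p^a}$, and at the first extra prime or second extra prime-power in the mixed families, but not before, is the crux of the argument.
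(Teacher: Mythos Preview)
Your high-level plan (Sylow factorisation of the lattice, the bipartite Euler bound, and monotonicity under subquotients) matches the paper's, but there are two genuine gaps. First, the statement as printed has a typo: the group should be $\Z_8\times\Z_4$, not $\Z_8\times\Z_8$. The paper in fact \emph{proves} that $\gamma(\Gamma(\Z_8\times\Z_8))\ge 2$ by exhibiting a minor consisting of two copies of $K_{3,3}$ sharing a single cut vertex and invoking the Battle--Harary--Kodama--Youngs additivity theorem. Your plan to draw $\Gamma(\Z_8\times\Z_8)$ on the torus would therefore fail. Second, your list of ``next groups up'' is not a valid set of minimal obstructions: showing $\Z_{16}\times\Z_{16}$, $\Z_{27}\times\Z_{27}$, $\Z_{49}\times\Z_{49}$ have genus $\ge 2$ does \emph{not} eliminate $\Z_{16}\times\Z_4$, $\Z_{27}\times\Z_9$, or $\Z_{125}\times\Z_{25}$, since monotonicity runs the other way. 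The correct frontier on the $p$-group side is $\{\Z_{16}\times\Z_4,\ \Z_8\times\Z_8,\ \Z_{27}\times\Z_9,\ \Z_{125}\times\Z_{25},\ \Z_{49}\times\Z_{49}\}$, and on the mixed side you are also missing $\Z_8\times\Z_2\times\Z_q$, $\Z_9\times\Z_3\times\Z_q$, and $\Z_{p}\times\Z_p\times\Z_{q^2}$ for $p\in\{2,3\}$.

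On the technical side, the paper avoids the Euler-bound degeneracy you correctly identified not by hunting for $K_{4,5}$ or $K_8$ subdivisions but by computing the genus \emph{exactly} for three parametric families: it introduces auxiliary graphs $G_n$ and $H_n$, proves $\gamma(G_n)=\lceil (n-2)/4\rceil$ and $\gamma(H_n)=2\lceil (n-2)/4\rceil$ by writing down explicit rotation systems, and then shows $\Gamma(\Z_{p^2}\times\Z_{p^2})$ and $\Gamma(\Z_{p^3}\times\Z_{p^2})$ are essentially $G_{p+1}$ and $H_p$ after local surgery, yielding $\gamma=\lceil(p-1)/4\rceil$ and $2\lceil(p-2)/4\rceil$ respectively; similarly $\gamma(\Gamma(\Z_p\times\Z_p\times\Z_q))=\lceil(p-1)/2\rceil$ via a $K_{4,p+1}$ minor plus an explicit face list. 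These formulas pin down the thresholds $p\le 5$, $p=2$, $p\le 3$ automatically. For the handful of sporadic obstructions where no formula applies ($\Z_{16}\times\Z_4$, $\Z_8\times\Z_8$, $\Z_8\times\Z_2\times\Z_q$, $\Z_9\times\Z_3\times\Z_q$, $\Z_2\times\Z_2\times\Z_{q^2}$), the paper finds a minor consisting of two $K_{3,3}$'s glued at a cut vertex, giving genus $\ge 2$ by block additivity; this is considerably easier to locate than a $K_{4,5}$ subdivision and is the trick you are missing.
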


\section{Cyclic Groups}
In \cite{ST04}, the authors determined all abelian groups whose subgroup lattice graphs are planar. For example, consider the subgroup lattice in Figure \ref{figure:cp13p22} of the cyclic group $\Z_{p_1^3p_2^2}$ where $p_1,p_2$ are distinct primes. The lattice is simply the lattice of divisors where two divisors are connected if and only if their ratio is prime. 

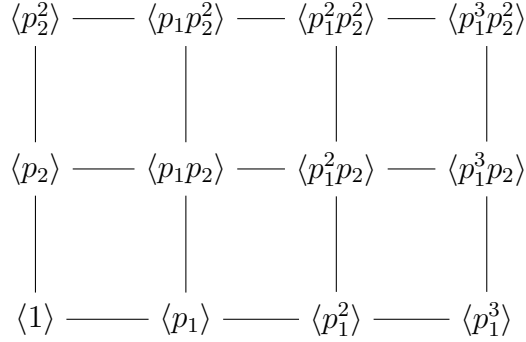
\begin{figure}[!h]
\begin{tikzpicture}[node distance=2cm]

\node(p13p22)                           {$\langle p_1^3p_2^2\rangle$};
\node(p12p22) [left of=p13p22] {$\langle p_1^2p_2^2\rangle$};
\node(p1p22) [left of=p12p22] {$\langle p_1p_2^2\rangle$};
\node(p22)      [left of=p1p22]       {$\langle p_2^2\rangle$};

\node(p13p2)      [below of=p13p22]      {$\langle p_1^3p_2\rangle$};
\node(p12p2)      [left of=p13p2]       {$\langle p_1^2p_2\rangle$};
\node(p1p2)      [left of=p12p2]       {$\langle p_1p_2\rangle$};
\node(p2)      [left of=p1p2]       {$\langle p_2\rangle$};

\node(p13)      [below of=p13p2]       {$\langle p_1^3\rangle$};
\node(p12)      [left of=p13]  {$\langle p_1^2\rangle$};
\node(p1)       [left of=p12] {$\langle p_1\rangle$};
\node(1)            [left of =p1]     {$\langle 1\rangle$};

\draw(1)       -- (p1);
\draw(p1)       -- (p12);
\draw(p12)       -- (p13);

\draw(p2)       -- (p1p2);
\draw(p1p2)       -- (p12p2);
\draw(p12p2)       -- (p13p2);

\draw(p22)       -- (p1p22);
\draw(p1p22)       -- (p12p22);
\draw(p12p22)       -- (p13p22);

\draw(1) -- (p2);
\draw(p2) -- (p22);

\draw(p1) -- (p1p2);
\draw(p1p2) -- (p1p22);

\draw(p12) -- (p12p2);
\draw(p12p2) -- (p12p22);

\draw(p13) -- (p13p2);
\draw(p13p2) -- (p13p22);

\end{tikzpicture}
\caption{Subgroup Lattice of $\Z_{p_1^3p_2^2}$}
\label{figure:cp13p22}
\end{figure}

We readily see that the subgroup lattice of $\Z_{p_1^3p_2^2}$ is simply the graph Cartesian product of the path graph on 3 vertices and the path graph on 4 vertices. This leads us to the following definition in which we follow the notation from \cite{MS22}.

\begin{definition}
A $k$-dimensional grid graph $\Gamma=\Gamma(e_1,\dots,e_k)$ is the graph Cartesian product of $k$ paths, where the $i^{th}$ path has length $e_i$. 
\end{definition}

From this definition, the following lemma follows immediately.

\begin{lemma}
The subgroup lattice of the cyclic group $\Z_{p_1^{e_1}\dots p_k^{e_k}}$ where the $p_i$ are distinct primes and $e_i\in\N$ is precisely $\Gamma(e_1,\dots,e_k)$.
\end{lemma}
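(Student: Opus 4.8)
The plan is to establish a bijection between the divisors of $n = p_1^{e_1}\cdots p_k^{e_k}$ (equivalently, the subgroups of $\Z_n$) and the vertices of the grid graph $\Gamma(e_1,\dots,e_k)$, and then check that this bijection carries the covering relation on subgroups to the adjacency relation in the grid. Since the vertex set of $\Gamma(e_1,\dots,e_k)$ is naturally indexed by tuples $(a_1,\dots,a_k)$ with $0 \le a_i \le e_i$, the map I would use sends the divisor $d = p_1^{a_1}\cdots p_k^{a_k}$ (i.e.\ the subgroup $\langle n/d\rangle$, or equivalently the unique subgroup of order $d$) to the tuple $(a_1,\dots,a_k)$. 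This is clearly a bijection, so the whole content is the edge correspondence.

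First I would recall that in a cyclic group the subgroup lattice is isomorphic to the divisor lattice of $n$: every subgroup is cyclic of a unique order dividing $n$, and containment $H_1 \le H_2$ corresponds exactly to divisibility of the orders. Next, I would translate the covering relation defining $\Gamma(G)$ in the Definition: $H_1$ and $H_2$ are adjacent precisely when $H_1 \le H_2$ with no intermediate subgroup strictly between them. Under the order correspondence this says that one order divides the other and their ratio is prime, which is the point already observed in the discussion of Figure~\ref{figure:cp13p22}. In terms of the exponent tuples $(a_1,\dots,a_k)$ and $(b_1,\dots,b_k)$, a prime ratio means the tuples differ in exactly one coordinate and that coordinate differs by exactly $1$.

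The second step is then to verify that this is exactly the adjacency condition in the graph Cartesian product of the paths. By the definition of a $k$-dimensional grid graph as the Cartesian product $P_{e_1+1}\,\square\,\cdots\,\square\,P_{e_k+1}$ of paths, two vertices are adjacent if and only if their coordinate tuples agree in all but one position and, in that one position, the two entries are adjacent in the corresponding path, i.e.\ they differ by exactly $1$. This matches the prime-ratio condition derived above verbatim, so the bijection is a graph isomorphism. I would present this as a short two-way implication: an edge of $\Gamma(\Z_n)$ gives a prime-index covering, hence a unit change in one coordinate, hence a grid edge, and conversely.

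I do not expect a serious obstacle here; the statement really is immediate once the two adjacency conditions are written side by side, as the excerpt itself signals (\emph{``the following lemma follows immediately''}). The only point requiring a little care is pinning down the indexing convention so that the $i$th coordinate ranges over $\{0,1,\dots,e_i\}$ and the $i$th path has length $e_i$ (hence $e_i+1$ vertices), so that a path of length $e_i$ really does correspond to the exponent $e_i$ appearing in $p_i^{e_i}$; getting this off-by-one bookkeeping straight is the whole subtlety. Everything else is the standard fact that a maximal chain refinement in the divisor lattice steps one prime at a time.
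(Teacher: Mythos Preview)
Your proposal is correct and is exactly the natural argument; the paper gives no proof at all beyond the remark that the lemma ``follows immediately'' from the definition of a grid graph. You have simply spelled out the bijection and edge correspondence that the author leaves implicit, so there is nothing to compare.
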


In \cite{MS22}, the authors prove many results about the genus of grid graphs $\Gamma(e_1,\dots,e_k)$. In particular, they prove the following result which lists all grid graphs with genus $0$ or $1$. The first two bullet points of the below theorem had already been discovered by Starr and Turner \cite{ST04} and the last two bullet points had been discovered by Berry, McGinnis, and Sanchez \cite{BMS12}. However, these earlier results used the language of subgroup lattice graphs of cyclic groups as opposed to grid graphs.

\begin{proposition}[\cite{BMS12,MS22,ST04}]\label{prop:cyclic_genus_one}
A grid graph $\Gamma(e_1,\dots,e_k)$ embeds on the torus if and only if
\begin{itemize}
    \item $k\le 2$,
    \item $k=3$ and at most one of $e_1,e_2,e_3$ is greater than 1,
    \item $k=3$ and $(e_1,e_2,e_3)\in\{(2,2,1),(3,2,1),(3,3,1)\}$, or
    \item $k=4$ and $(e_1,e_2,e_3,e_4)=(1,1,1,1)$.
\end{itemize}
\end{proposition}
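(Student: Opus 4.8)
The plan is to prove the two directions separately: the ``if'' direction by exhibiting explicit embeddings, and the ``only if'' direction by a monotonicity reduction to finitely many minimal forbidden grids followed by genus lower bounds. Throughout I would use that grids are bipartite, so every cycle has even length and the girth is $4$, and that the only $4$-cycles in $\Gamma(e_1,\dots,e_k)$ are the axis-aligned unit squares (in a Cartesian product of paths a simple $4$-cycle must cancel its coordinate changes in two distinct coordinates, since a path contains no cycle).

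For the ``if'' direction, the cases $k\le 2$ are planar, being subgraphs of the integer lattice, and the cases $k=3$ with at most one $e_i>1$ are the ``stacks of cubes'' $\Gamma(e,1,1)$, which are again planar. It then remains to produce genus-one embeddings of the four graphs $\Gamma(2,2,1)$, $\Gamma(3,2,1)$, $\Gamma(3,3,1)$, and $\Gamma(1,1,1,1)=Q_4$. For each I would supply an explicit rotation system, equivalently a fundamental-polygon identification of the square; the $4$-cube $Q_4$ has a standard toroidal embedding, and the three remaining grids are small enough to embed directly.

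For the ``only if'' direction I would first record that genus is monotone under the subgraph order on grids: for grids of equal dimension, $f_i\le e_i$ componentwise gives $\Gamma(f)\subseteq\Gamma(e)$, and $\Gamma(e_1,\dots,e_{k-1})\subseteq\Gamma(e_1,\dots,e_{k-1},e_k)$ for any $e_k\ge 1$, so reducing any $e_i$ (possibly deleting a coordinate) never increases the genus. A short case analysis then shows that every grid \emph{not} on the list contains one of the four minimal grids $\Gamma(2,2,2)$, $\Gamma(4,2,1)$, $\Gamma(2,1,1,1)$, and $\Gamma(1,1,1,1,1)=Q_5$, so it suffices to prove each of these has genus at least $2$. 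Since any embedding in $S_g$ satisfies $F\le E/2$, Euler's formula gives $\gamma\ge 1-\tfrac{V}{2}+\tfrac{E}{4}$. For $\Gamma(2,1,1,1)$, with $V=24$ and $E=52$, this yields $\gamma\ge 2$, and for $Q_5$, with $V=32$ and $E=80$, it yields $\gamma\ge 5$, settling both cleanly.

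The hard part will be $\Gamma(2,2,2)$ ($V=27$, $E=54$) and $\Gamma(4,2,1)$ ($V=30$, $E=59$), where the same count gives only $\gamma\ge 1$. The Euler bound already excludes planarity, so a toroidal embedding would have genus exactly $1$, forcing $F=E-V$. For $\Gamma(2,2,2)$ this gives $F=27=E/2$, so every face must be a $4$-cycle; such an embedding would amount to selecting $27$ of the $36$ unit squares so that each edge lies in exactly two chosen faces and the faces around every vertex close into a single rotation. For $\Gamma(4,2,1)$ the slack $2E-4F=2$ forces all faces to be unit squares except a single hexagon. The genuine obstruction is to show that no such selection assembles into an orientable torus, and this is where I expect to spend the most effort; I would carry it out by a discharging/parity argument on the face–edge incidences (or an equivalent homological count), which is exactly the analysis performed for grid graphs in \cite{MS22}, on which I would rely to conclude $\gamma(\Gamma(2,2,2))\ge 2$ and $\gamma(\Gamma(4,2,1))\ge 2$.
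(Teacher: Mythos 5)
The paper does not actually prove this proposition: it is imported wholesale from \cite{BMS12,MS22,ST04}, so there is no in-house argument to compare yours against. Your outline is nonetheless structurally sound and matches how those sources proceed. Genus is indeed monotone under shrinking or deleting grid coordinates; your four minimal forbidden grids $\Gamma(2,2,2)$, $\Gamma(4,2,1)$, $\Gamma(2,1,1,1)$, $\Gamma(1,1,1,1,1)$ do cover every grid off the list; and the girth-$4$ Euler bound $\gamma\ge 1+\frac{|E|}{4}-\frac{|V|}{2}$ (the paper's inequality \eqref{eq:lower_bound}) correctly eliminates $\Gamma(2,1,1,1)$ and $Q_5$ with the vertex and edge counts you give.

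The gap is exactly where you locate it: for $\Gamma(2,2,2)$ and $\Gamma(4,2,1)$ the Euler bound yields only $\gamma\ge 1$, and the discharging/parity analysis of the face structure that would rule out a toroidal embedding is not carried out but delegated to \cite{MS22}. Those two lower bounds are the entire nontrivial content of the ``only if'' direction beyond routine counting, so as written your argument is a reduction to the cited literature rather than a proof. The statements you need are precisely the exact formulas the paper later quotes as \eqref{eq:genus_formula_1} and \eqref{eq:genus_formula_22}, namely $\gamma(\Gamma(e_1,e_2,1))=\lfloor e_1/2\rfloor\lfloor e_2/2\rfloor$ and $\gamma(\Gamma(e_1,2,2))=e_1$, which give $\gamma(\Gamma(4,2,1))=\gamma(\Gamma(2,2,2))=2$; if citing \cite{MS22} for these is allowed, your proof closes, but then the proposition itself is already a citation and nothing new has been established. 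Finally, for the ``if'' direction you still owe the four explicit toroidal embeddings of $\Gamma(2,2,1)$, $\Gamma(3,2,1)$, $\Gamma(3,3,1)$, and $Q_4$; they exist and are routine, but ``small enough to embed directly'' is an IOU rather than an embedding.
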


Thus, all cyclic groups whose subgroup lattice graphs are genus zero (the first two bullet points above) or one (the last two bullet points above) have been classified.

\subsection{Tools for Finding the Genus of Subgroup Lattice Graphs of Cyclic Groups}
Although Proposition \ref{prop:cyclic_genus_one} resolves the question of which cyclic groups have subgroup lattices of genus one, there are several results in \cite{MS22} that let us determine the genus of the subgroup lattice of cyclic groups in other cases. One of the most useful results is the exact genus of grid graphs $\Gamma(e_1,\dots,e_k)$ for which at least 3 of the $e_i$ are odd. Let $E(\Gamma)$ and $V(\Gamma)$ represent the number of edges and vertices respectively in a grid graph $\Gamma$. The following result is due to White \cite{W70}; however, it has been rephrased in the language of \cite{MS22}.

\begin{proposition}[Theorem 4,\cite{W70}]
    Let $\Gamma=\Gamma(e_1,\dots,e_k)$ be a $k$-dimensional grid graph where $k\ge 3$ and at least three $e_i$ are odd, for $i=1,\dots,k$. Then $G$ is a lower-embeddable grid graph, i.e. 
    $$
    \gamma(\Gamma)=1+\frac{|E(\Gamma)|}{4}-\frac{|V(\Gamma)|}{2}=1+\frac{1}{2}\prod_{i=1}^k{(e_i+1)}\left[ \frac{1}{2}\sum_{i=1}^k{\frac{e_i}{e_i+1}-1}\right].
    $$
\end{proposition}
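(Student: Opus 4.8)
The plan is to prove the genus formula for grid graphs with at least three odd side-lengths by combining the Euler-characteristic lower bound with an explicit quadrilateral embedding. The formula has the shape $\gamma(\Gamma)=1+|E(\Gamma)|/4-|V(\Gamma)|/2$, which is exactly the value one obtains from Euler's formula when the graph admits a $2$-cell embedding in which every face is a quadrilateral (a $4$-gon). Since a grid graph is bipartite, its girth is $4$, so no embedding can have faces of size smaller than $4$; this gives the matching lower bound. The whole argument therefore reduces to (i) establishing the general lower bound $\gamma(\Gamma)\ge 1+|E|/4-|V|/2$ valid for any bipartite graph, and (ii) constructing, under the hypothesis that at least three of the $e_i$ are odd, an orientable $2$-cell embedding in which all faces are quadrilaterals.

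First I would record the numerical reduction. For $\Gamma=\Gamma(e_1,\dots,e_k)$ one has $|V|=\prod_{i=1}^k(e_i+1)$ and $|E|=\sum_{j=1}^k e_j\prod_{i\ne j}(e_i+1)=|V|\sum_{j=1}^k \tfrac{e_j}{e_j+1}$, so the second displayed expression in the statement is just an algebraic rewriting of the first; I would verify this identity once and then work exclusively with the cleaner form $1+|E|/4-|V|/2$. For the lower bound, I would apply Euler's formula $V-E+F=2-2g$ together with the bipartite face-counting inequality $2E\ge 4F$ (equivalently $F\le E/2$), which holds because every face boundary of a bipartite graph has even length at least $4$. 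Substituting $F\le E/2$ into Euler's formula yields $2-2\gamma(\Gamma)=V-E+F\le V-E+E/2=V-E/2$, hence $\gamma(\Gamma)\ge 1+E/4-V/2$, which is the desired lower bound and holds for \emph{every} grid graph, regardless of parity.

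The substantive content, and the step I expect to be the main obstacle, is the matching upper bound: exhibiting an orientable quadrilateral embedding under the three-odd-sides hypothesis. Equality in the lower bound forces every face to be a $4$-cycle, so I must produce a rotation system (a cyclic ordering of the edges around each vertex) whose facial walks are exactly the unit squares of the grid, and then confirm via Euler's formula that the resulting surface is orientable of the claimed genus. The natural approach is White's current-graph / voltage-graph machinery (indeed the result is attributed to White), which builds such a quad-mesh embedding combinatorially; the parity condition "at least three $e_i$ odd" is precisely what is needed to make the current assignments consistent and the resulting embedding orientable and $2$-cell. I would set up the appropriate voltage assignment on a quotient of the grid, check that the Kirchhoff-type (current) conditions hold at each vertex so that every face of the derived embedding is a quadrilateral, and verify orientability. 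The delicate points are the parity bookkeeping that the three-odd hypothesis governs and the verification that no face degenerates, and here I would lean directly on White's construction in \cite{W70} rather than reprove it, adapting only the notation to the grid-graph language of \cite{MS22}.
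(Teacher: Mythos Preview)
The paper does not supply its own proof of this proposition: it is stated as a citation of White's theorem \cite{W70} (rephrased in the grid-graph language of \cite{MS22}) and used as a black box. So there is no in-paper argument to compare your proposal against.

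That said, your outline is the correct one and is essentially White's own strategy. The lower bound you derive, $\gamma(\Gamma)\ge 1+|E|/4-|V|/2$, from Euler's formula together with the bipartite face inequality $2|E|\ge 4|F|$, is exactly the content of the later-cited Proposition labelled ``Corollary 1, \cite{MS22}'' (equation \eqref{eq:lower_bound} in the paper), so that half is fine. For the upper bound you correctly identify that one needs an orientable $2$-cell embedding with all quadrilateral faces, and you defer the actual construction to White's current/voltage-graph machinery. That is honest but leaves the substantive part unproved: the real work in \cite{W70} is precisely building that quadrilateral embedding, and the ``at least three odd $e_i$'' hypothesis enters only there. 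If you intend a self-contained proof rather than a citation, you would need to carry out the rotation-system construction explicitly and verify the parity bookkeeping; as written, your proposal (like the paper) ultimately rests on \cite{W70} for that step.
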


This leads to the immediate corollary.

\begin{corollary}
    If $G=\Z_{p_1^{e_1}\dots p_k^{e_k}}$ is a cyclic group with $k\ge 3$ and $e_i$ odd for at least three $i$, then:
    \begin{equation}\label{eq:genus_formula_odds}
    \gamma(\Gamma(G))=1+\frac{1}{2}\prod_{i=1}^k{(e_i+1)}\left[ \frac{1}{2}\sum_{i=1}^k{\frac{e_i}{e_i+1}-1}\right].
    \end{equation}
\end{corollary}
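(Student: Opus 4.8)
The plan is to reduce the claim to a direct application of White's proposition stated just above. The content is entirely contained in that proposition; the corollary is the translation of its conclusion from the grid-graph setting back to cyclic groups. First I would invoke the Lemma identifying the subgroup lattice graph with a grid graph: since $G=\Z_{p_1^{e_1}\cdots p_k^{e_k}}$ with the $p_i$ distinct primes and $e_i\in\N$, that Lemma gives an equality of graphs $\Gamma(G)=\Gamma(e_1,\dots,e_k)$. Hence $\gamma(\Gamma(G))=\gamma(\Gamma(e_1,\dots,e_k))$, and it suffices to compute the genus of the grid graph.

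Next I would verify that the hypotheses of White's proposition are satisfied. By assumption $k\ge 3$ and at least three of the $e_i$ are odd, which is exactly the condition under which $\Gamma(e_1,\dots,e_k)$ is lower-embeddable. Applying the proposition then yields
$$
\gamma(\Gamma(G))=1+\frac{|E(\Gamma)|}{4}-\frac{|V(\Gamma)|}{2},
$$
and the proposition already records the closed-form evaluation of the right-hand side as the stated product-and-sum expression, which is precisely Equation~\eqref{eq:genus_formula_odds}.

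If one prefers the closed form to be self-contained rather than quoted, I would compute the two combinatorial quantities directly. A path of length $e_i$ has $e_i+1$ vertices, so their Cartesian product has $|V(\Gamma)|=\prod_{i=1}^k(e_i+1)$ vertices; counting edges according to the single coordinate in which two adjacent vertices differ gives $|E(\Gamma)|=\sum_{i=1}^k e_i\prod_{j\ne i}(e_j+1)=\prod_{i=1}^k(e_i+1)\sum_{i=1}^k\frac{e_i}{e_i+1}$. Substituting these into $1+|E(\Gamma)|/4-|V(\Gamma)|/2$ and factoring out $\tfrac12\prod_{i=1}^k(e_i+1)$ recovers the displayed formula.

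There is essentially no obstacle here, since the hard analytic work lies in White's theorem, which I am permitted to assume. The only points demanding any care are confirming that the parity hypothesis ``at least three $e_i$ odd'' is inherited verbatim from the corollary's assumption, so that lower-embeddability genuinely applies, and checking the elementary vertex and edge counts used in the optional self-contained computation; both are routine.
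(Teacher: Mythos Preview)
Your proposal is correct and matches the paper's treatment: the paper states this as an ``immediate corollary'' of White's proposition together with the lemma identifying $\Gamma(G)$ with the grid graph $\Gamma(e_1,\dots,e_k)$, which is exactly the reduction you describe. Your optional vertex and edge count is a harmless elaboration beyond what the paper records.
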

Here is an easy consequence of the previous result.
\begin{corollary}
    Consider the subgroup lattice graph of the cyclic group $G=\Z_{p_1^np_2p_3p_4}$ where $p_i$ are distinct primes. Then 
    \begin{equation}\label{eq:genus_formula_n111}
    \gamma(\Gamma(G))=1+\frac{1}{2}\cdot (n+1)\cdot 2^3\cdot \left[\frac{1}{2}\left(\frac{n}{n+1}+3\cdot \frac{1}{2}\right)-1\right]=n.
    \end{equation}
\end{corollary}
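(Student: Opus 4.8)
The plan is to recognize this statement as a direct specialization of the preceding corollary, followed by an arithmetic simplification. Writing $G=\Z_{p_1^np_2p_3p_4}$ in the grid-graph notation, the relevant exponents are $(e_1,e_2,e_3,e_4)=(n,1,1,1)$, so we are in the case $k=4$. The crucial observation is that three of these exponents equal $1$, which is odd; hence, regardless of the parity of $n$, at least three of the $e_i$ are odd. This is exactly the hypothesis of the previous corollary, so Equation \eqref{eq:genus_formula_odds} applies and expresses $\gamma(\Gamma(G))$ in closed form.

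First I would evaluate the two constituent quantities appearing in that formula. The product $\prod_{i=1}^4(e_i+1)$ becomes $(n+1)\cdot 2^3$, since each of the three unit exponents contributes a factor of $2$. The sum $\sum_{i=1}^4\tfrac{e_i}{e_i+1}$ becomes $\tfrac{n}{n+1}+3\cdot\tfrac12$, since each unit exponent contributes $\tfrac12$. Substituting these two values into \eqref{eq:genus_formula_odds} reproduces precisely the middle expression displayed in \eqref{eq:genus_formula_n111}, so at this point the statement reduces to verifying a single numerical identity.

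The remaining step is purely algebraic: simplify $1+\tfrac12(n+1)2^3\bigl[\tfrac12(\tfrac{n}{n+1}+\tfrac32)-1\bigr]$. Factoring the coefficient as $4(n+1)$ and distributing, the bracketed factor simplifies to $\tfrac{n}{2(n+1)}-\tfrac14$; multiplying through, the $(n+1)$ cancels against the denominator in the first term to give $2n$, the second term contributes $-(n+1)$, and adding the leading $1$ collapses everything to $n$. I expect no genuine obstacle here: the only point requiring care is confirming the odd-exponent hypothesis so that the earlier corollary is legitimately invoked, after which the equality $\gamma(\Gamma(G))=n$ follows by a one-line reduction.
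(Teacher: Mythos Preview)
Your proposal is correct and matches the paper's approach exactly: the paper presents this corollary as ``an easy consequence of the previous result,'' i.e., a direct specialization of \eqref{eq:genus_formula_odds} to $(e_1,e_2,e_3,e_4)=(n,1,1,1)$ followed by arithmetic simplification, which is precisely what you do. Your verification that at least three exponents are odd and your algebraic reduction to $n$ are both sound.
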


Another easy consequence is the genus of the $k$-hypercube, which is the subgroup lattice graph of $\Z_{p_1\dots p_k}$ where $p_i$ are distinct primes.
\begin{corollary}
    Consider the subgroup lattice graph of the cyclic group $G=\Z_{p_1\dots p_k}$ where $p_i$ are distinct primes and $k\ge 3$. Then
    \begin{equation}\label{eq:genus_formula_hypercube}
    \gamma(\Gamma(G))=1+\frac{1}{2}\cdot 2^k\cdot \left[\frac{1}{2}\sum_{i=1}^k{\frac{1}{2}}-1 \right]=1+2^{k-3}(k-4).
    \end{equation}
\end{corollary}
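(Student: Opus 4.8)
The plan is to apply the preceding corollary (Equation \eqref{eq:genus_formula_odds}) directly, since this case is simply the specialization $e_1=\dots=e_k=1$. First I would invoke the lemma identifying the subgroup lattice of $\Z_{p_1\cdots p_k}$ with the grid graph $\Gamma(1,\dots,1)$; with every exponent equal to $1$, this grid graph is precisely the $k$-dimensional hypercube, so the claim about the hypercube and the claim about $\Gamma(1,\dots,1)$ coincide.

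Next I would verify that the hypotheses of the genus formula in Equation \eqref{eq:genus_formula_odds} are satisfied. That formula requires $k\ge 3$ together with at least three of the exponents $e_i$ being odd. Here $k\ge 3$ holds by assumption, and since every $e_i=1$ is odd, certainly at least three of them are odd. Hence the formula applies verbatim, and no separate embedding argument is needed.

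The remaining work is a routine substitution. Setting each $e_i=1$ gives $\prod_{i=1}^k(e_i+1)=2^k$ and $\frac{e_i}{e_i+1}=\frac12$ for every $i$, so that $\sum_{i=1}^k\frac{e_i}{e_i+1}=\frac{k}{2}$. Feeding these into Equation \eqref{eq:genus_formula_odds} yields
\[
\gamma(\Gamma(G))=1+\frac12\cdot 2^k\left[\frac12\cdot\frac{k}{2}-1\right]=1+2^{k-1}\cdot\frac{k-4}{4}=1+2^{k-3}(k-4),
\]
which is exactly the asserted value. Because the entire argument is a direct specialization of an already-established formula, there is no genuine obstacle here; the only point that needs a moment's care is confirming the "at least three odd exponents" hypothesis, and this is automatic since all exponents equal $1$.
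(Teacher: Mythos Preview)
Your proposal is correct and matches the paper's approach exactly: the paper presents this corollary as an immediate consequence of Equation~\eqref{eq:genus_formula_odds} obtained by setting every $e_i=1$, and your write-up carries out precisely that specialization together with the check that the hypotheses ($k\ge 3$ and at least three odd exponents) are met.
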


We now state two additional results from \cite{MS22} that will be needed in the proof of Theorem \ref{theorem:main}.

\begin{proposition}[Theorem 3, \cite{MS22}]
For any $e_1,e_2\in\N$, we have 
\begin{equation}\label{eq:genus_formula_1}
\gamma(\Gamma(e_1,e_2,1))=\left\lfloor \frac{e_1}{2}\right\rfloor \left\lfloor \frac{e_2}{2}\right\rfloor.
\end{equation}
\end{proposition}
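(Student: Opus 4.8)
The plan is to prove matching lower and upper bounds for $\gamma(\Gamma(e_1,e_2,1))$. First I would record the basic combinatorial data: writing the graph as two copies of the planar grid $\Gamma(e_1,e_2)$ (the two ``layers,'' each on $(e_1+1)(e_2+1)$ vertices) joined by a perfect matching of $(e_1+1)(e_2+1)$ ``vertical'' edges, a direct count gives $|V|=2(e_1+1)(e_2+1)$ and $|E|=5e_1e_2+3e_1+3e_2+1$. The graph is bipartite of girth $4$, so in any embedding every face is bounded by at least four edges, and Euler's formula yields $\gamma(\Gamma(e_1,e_2,1)) \ge 1 + \tfrac{|E|}{4} - \tfrac{|V|}{2} = \tfrac{(e_1-1)(e_2-1)}{4}$. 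When both $e_1$ and $e_2$ are odd this equals $\lfloor e_1/2\rfloor\lfloor e_2/2\rfloor$, and moreover all three parameters $e_1,e_2,1$ are odd, so White's result stated above applies and gives the exact value; thus the both-odd case is immediate. It remains to treat the case in which at least one of $e_1,e_2$ is even.

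For the upper bound I would argue by induction on $e_1$, reducing $e_1$ to $e_1-2$ and using the identity $\lfloor e_1/2\rfloor\lfloor e_2/2\rfloor = \lfloor (e_1-2)/2\rfloor\lfloor e_2/2\rfloor + \lfloor e_2/2\rfloor$. The base cases are $\Gamma(1,e_2,1)$, which is the Cartesian product of a $4$-cycle with a path and hence planar, and $\Gamma(2,e_2,1)$, for which I would exhibit an explicit embedding using $\lfloor e_2/2\rfloor$ handles (obtained by the symmetric induction in $e_2$ from the planar $\Gamma(2,1,1)$ and the toroidal $\Gamma(2,2,1)$). For the inductive step, starting from an optimal embedding of $\Gamma(e_1-2,e_2,1)$ on $S_{g'}$ with $g'=\lfloor(e_1-2)/2\rfloor\lfloor e_2/2\rfloor$, I would attach the two new cell-columns (a double ladder strip together with its vertical edges) inside a boundary face, routing the new edges so that they create exactly $\lfloor e_2/2\rfloor$ independent crossings, each absorbed by a single new handle. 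This produces an embedding on $S_{g'+\lfloor e_2/2\rfloor}$, completing the upper bound.

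The main obstacle is the matching lower bound when at least one $e_i$ is even, where the Euler/girth estimate above is \emph{not} tight: its slack below $\lfloor e_1/2\rfloor\lfloor e_2/2\rfloor$ grows like $(e_1+e_2)/4$, so an optimal embedding is forced to have many faces of size larger than $4$, and a purely global face count cannot detect this. I would instead aim to establish the increment bound $\gamma(\Gamma(e_1,e_2,1)) \ge \gamma(\Gamma(e_1-2,e_2,1)) + \lfloor e_2/2\rfloor$ and close the induction. Two avenues seem promising. The first is an amalgamation/additivity argument in the spirit of Battle--Harary--Kodama--Youngs and Stahl--Beineke: delete a small, carefully chosen set of vertical edges so that the remaining subgraph becomes an amalgam (along single vertices or edges) of blocks whose genera are computed exactly by White's formula and sum to $\lfloor e_1/2\rfloor\lfloor e_2/2\rfloor$; since deleting edges cannot raise the genus, this would give the bound. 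The difficulty is that the natural $2\times 2$ block prisms overlap in whole columns rather than in a single vertex or edge, so the decomposition must be engineered so that the additivity hypotheses genuinely apply. The second avenue is a local analysis of an optimal rotation system restricted to the two outermost cell-columns, showing that the vertical edges there are forced to bound at least $\lfloor e_2/2\rfloor$ faces of length $\ge 6$ essentially independently of the rest of the embedding. I expect the first avenue, made to interface cleanly with White's formula, to be the heart of the argument.
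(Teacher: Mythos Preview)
The paper does not prove this proposition at all: it is quoted as Theorem~3 of \cite{MS22} and used only as an input to later arguments. There is therefore no proof in the present paper to compare your proposal against.

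On the merits of the proposal itself: your vertex and edge counts are correct, the girth-$4$ Euler bound $\gamma\ge (e_1-1)(e_2-1)/4$ is valid, and your observation that this settles the both-odd case via White's theorem is right. Your inductive upper-bound sketch is plausible. The genuine gap you yourself flag is real: when at least one $e_i$ is even, the Euler bound falls short of $\lfloor e_1/2\rfloor\lfloor e_2/2\rfloor$ by roughly $(e_1+e_2)/4$, and neither of your two avenues for closing it is carried out. In particular, the block-decomposition idea is delicate exactly where you note---the natural sub-prisms share whole columns, not single vertices, so it is not clear that any edge-deletion yields an amalgam to which Battle--Harary--Kodama--Youngs applies with the right total. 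So as written this is an outline with the hard step left open, not a proof; but since the paper under review simply cites the result, there is nothing further here to compare against.
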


\begin{proposition}[Theorem 4, \cite{MS22}]
For any $e_1\in \N$, we have that 
\begin{equation}\label{eq:genus_formula_22}
\gamma(\Gamma(e_1,2,2))=e_1.
\end{equation}
\end{proposition}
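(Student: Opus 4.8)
The plan is to prove the two inequalities $\gamma(\Gamma(e_1,2,2))\le e_1$ and $\gamma(\Gamma(e_1,2,2))\ge e_1$ separately, inducting on $e_1$ in each case. As set-up I would first record the combinatorial data. Fixing the first coordinate cuts $\Gamma(e_1,2,2)$ into a chain of slices $S_0,S_1,\dots,S_{e_1}$, each a copy of the planar $3\times 3$ grid $\Gamma(2,2)$, with $S_i$ joined to $S_{i+1}$ by a perfect matching of $9$ edges. Counting by direction gives $|V(\Gamma)|=9(e_1+1)$ and $|E(\Gamma)|=9e_1+12(e_1+1)=21e_1+12$, and since $\Gamma$ is bipartite every face of any embedding has length at least $4$; moreover every $4$-cycle of a grid graph is a unit square, a fact I would verify and use later. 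The base cases $\gamma(\Gamma(0,2,2))=\gamma(\Gamma(2,2))=0$ and $\gamma(\Gamma(1,2,2))=\gamma(\Gamma(2,2,1))=\lfloor 2/2\rfloor\lfloor 2/2\rfloor=1$ are available from planarity of the $3\times 3$ grid and from \eqref{eq:genus_formula_1}, respectively.

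For the upper bound I would show $\gamma(\Gamma(e_1,2,2))\le \gamma(\Gamma(e_1-1,2,2))+1$ by attaching the final slice with a single handle. Starting from an optimal embedding of $\Gamma(e_1-1,2,2)$ in $S_{e_1-1}$, I would add one handle bridging two faces incident to the terminal slice $S_{e_1-1}$, draw the new planar grid $S_{e_1}$ inside the resulting annular region, and route the $9$ connecting edges across the handle. The content of the step is checking that the $9$ matching edges can be drawn without crossings, which is a finite planarity check on the rotation at the handle; an Euler-characteristic audit confirms consistency, since adding $9$ vertices and $21$ edges together with one handle changes the Euler characteristic by $9-21+\Delta F=-2$, forcing exactly $\Delta F=10$ new faces. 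Iterating from the base case yields $\gamma(\Gamma(e_1,2,2))\le e_1$.

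The genuinely hard part is the matching lower bound, and the main obstacle is that the Euler-characteristic estimate is not tight here: combining the vertex and edge counts with the girth-$4$ inequality $2|E|\ge 4F$ gives only $\gamma\ge \tfrac{3}{4}e_1-\tfrac12$, which falls short of $e_1$. To close this gap I would again induct, aiming to establish $\gamma(\Gamma(e_1,2,2))\ge \gamma(\Gamma(e_1-1,2,2))+1$, i.e. that the terminal slice cannot be absorbed into an optimal embedding of the rest without an extra handle. Two routes seem viable: (i) a refined face count that uses the fact that every quadrilateral face must be a unit square together with the constraint that each edge lies on at most two faces, so as to cap the number of quadrilateral faces and force enough hexagonal faces to push the Euler estimate up to $e_1$; or (ii) a homological argument showing that the $e_1$ nested $9$-edge matchings behave as independent cuts, each contributing an independent non-separating cycle and hence one unit of genus. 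I expect route (i) to be cleaner, but quantifying exactly how the overlap pattern of unit squares limits the quadrilateral faces is where I anticipate the real difficulty of the whole argument to lie.
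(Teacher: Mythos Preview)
The paper does not prove this proposition at all: it is quoted verbatim as Theorem~4 of \cite{MS22} and used as a black box, so there is no ``paper's own proof'' to compare your attempt against. Your plan is therefore an attempt to reprove an external result, and it should be judged on its own merits.

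Your upper bound is fine in spirit, and in fact the paper already gives you the tools to do it cleanly without the ad hoc handle-attachment: apply Proposition~\ref{prop:upper_bounds}(b) when $e_1$ is odd to get $\gamma(\Gamma(e_1,2,2))\le \gamma(\Gamma(e_1,1,1))+\tfrac{(e_1+1)\cdot 4}{4}-1=e_1$, and Proposition~\ref{prop:upper_bounds}(c) when $e_1$ is even to get $\gamma(\Gamma(e_1,2,2))\le \gamma(\Gamma(e_1-1,1,1))+\tfrac{4e_1+4}{4}-1=e_1$, using in both cases that $\Gamma(m,1,1)$ is planar.

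The genuine gap is exactly where you flag it: the lower bound. You correctly compute that the girth-$4$ Euler estimate \eqref{eq:lower_bound} gives only $\gamma\ge \tfrac{3}{4}e_1-\tfrac12$, and neither of your proposed routes is carried out. Route~(i) would require showing that at most $9e_1+3$ of the $16e_1+4$ unit squares can simultaneously be faces, which is a substantial combinatorial claim you have not established; route~(ii) is only a heuristic, since the $9$-edge matchings are edge-cuts, not cycles, and there is no general principle that $e_1$ disjoint edge-cuts force genus $e_1$. As written, the proposal is a reasonable outline of the upper bound together with an honest acknowledgment that you do not yet have the lower bound, so it is not a proof of the proposition.
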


\subsection{Lower and Upper Bounds on the Genus of a Grid Graph}
While the previous section provided exact formulas for the genus of various grid graphs, there are several inequalities for the genus that will be useful. The first result is a lower bound for the genus of any grid graph.

\begin{proposition}[Corollary 1, \cite{MS22}]
    Let $\Gamma(e_1,\dots, e_k)$ be a grid graph with $k>1$. Then
    \begin{equation}\label{eq:lower_bound}
    \gamma(\Gamma(e_1,\dots,e_k))\ge 1+\frac{|E|}{4}-\frac{|V|}{2}=1+\frac{1}{2}\prod_{i=1}^k{(e_i+1)}\left[ \frac{1}{2}\sum_{i=1}^k{\frac{e_i}{e_i+1}-1}\right]
    \end{equation}
    where $|E|$ and $|V|$ are the number of edges and vertices respectively in $\Gamma(e_1,\dots,e_k)$.
\end{proposition}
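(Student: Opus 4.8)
The plan is to combine Euler's formula with the bipartiteness of the grid graph. First I would record the two combinatorial facts about $\Gamma = \Gamma(e_1,\dots,e_k)$ that drive everything. Since $\Gamma$ is the graph Cartesian product of $k$ paths, where the $i$-th path has $e_i+1$ vertices and $e_i$ edges, it has $|V| = \prod_{i=1}^k (e_i+1)$ vertices, and its edge count is $|E| = \sum_{i=1}^k e_i \prod_{j\neq i}(e_j+1) = |V|\sum_{i=1}^k \frac{e_i}{e_i+1}$ (each edge arises by choosing one coordinate $i$ in which to step along its path and fixing the remaining coordinates). I would also note that $\Gamma$ is connected and bipartite, since paths are bipartite and a Cartesian product of bipartite graphs is bipartite, and that because $k>1$ it contains a $4$-cycle, so its girth is exactly $4$.

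Next I would fix a minimum-genus embedding of $\Gamma$ into $S_g$ with $g = \gamma(\Gamma)$. By Youngs' theorem a minimum-genus embedding of a connected graph is cellular, so Euler's formula $|V| - |E| + |F| = 2 - 2g$ applies, where $|F|$ is the number of faces. The key inequality is a bound on $|F|$: summing the boundary-walk lengths over all faces counts each edge exactly twice, so $\sum_f \deg(f) = 2|E|$, and since $\Gamma$ is bipartite (and bridgeless, being $2$-connected for $k>1$) every face boundary has even length at least $4$. Hence $2|E| \ge 4|F|$, i.e. $|F| \le |E|/2$.

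Substituting into Euler's formula yields $2g = 2 - |V| + |E| - |F| \ge 2 - |V| + |E| - |E|/2 = 2 - |V| + |E|/2$, so $g \ge 1 + |E|/4 - |V|/2$, which is the first form of the bound. Finally I would verify the closed form by plugging the expressions for $|V|$ and $|E|$ into $1 + |E|/4 - |V|/2$; factoring out $|V| = \prod_{i=1}^k (e_i+1)$ gives exactly $1 + \frac{1}{2} \prod_{i=1}^k (e_i+1)\bigl[\frac{1}{2} \sum_{i=1}^k \frac{e_i}{e_i+1} - 1\bigr]$.

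The argument is short, and the only points that merit care rather than genuine difficulty are the two structural inputs: justifying that the face bound uses the factor $4$, which relies on bipartiteness (equivalently girth at least $4$) and would fail for a graph containing triangles, and invoking cellularity of a minimum-genus embedding so that Euler's formula holds with equality. Everything else is the routine edge/vertex bookkeeping for a Cartesian product of paths followed by one line of algebra.
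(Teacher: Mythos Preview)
Your argument is correct and matches the approach behind the cited result. The paper itself does not prove this proposition---it is quoted as Corollary~1 of \cite{MS22}---but it does record the underlying ingredient, namely Proposition~1 of \cite{MS22}: a connected graph of girth at least $4$ satisfies $\gamma(\Gamma)\ge 1+|E|/4-|V|/2$. Your proof is exactly the standard derivation of that inequality (cellular minimum-genus embedding, $2|E|\ge 4|F|$ from the absence of odd or length-$2$ face boundaries, Euler's formula), specialized via bipartiteness of the Cartesian product of paths, followed by the routine vertex/edge count for grid graphs. There is no meaningful difference in method.
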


\begin{proposition}[Proposition 3, \cite{MS22}]\label{prop:upper_bounds}
Let $\Gamma=\Gamma(e_1,e_2,e_3)$ be a $3$-dimensional grid with at least one even grid parameter.
\begin{enumerate}[(a)]
    \item If exactly one grid parameter is even (the third parameter), then 
    \begin{equation}\label{eq:upper_bound}
        \gamma(\Gamma(e_1,e_2,e_3)\le \gamma(\Gamma(e_1,e_2,e_3-1))+\frac{(e_1+1)(e_2+1)}{4}-1.
    \end{equation}

    \item If exactly two grid parameters are even (the second \& third parameters), then 
        \begin{equation*}
            \gamma(\Gamma(e_1,e_2,e_3)\le \gamma(\Gamma(e_1,e_2-1,e_3-1))+\frac{(e_1+1)(e_2+e_3)}{4}-1.
        \end{equation*}

    \item If all three grid parameters are even, then
        \begin{equation*}
            \gamma(\Gamma(e_1,e_2,e_3))\le \gamma(\Gamma(e_1-1,e_2-1,e_3-1))+\frac{e_1e_2+e_1e_3+e_2e_3}{4}-1
        \end{equation*}
\end{enumerate}

\end{proposition}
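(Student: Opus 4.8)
The plan is to treat each inequality as a recursive construction: starting from an embedding of the grid with one (or more) of the even parameters reduced by one, I would build an embedding of $\Gamma(e_1,e_2,e_3)$ by attaching the missing slab(s) and accounting for the genus increase handle by handle. In the setting of each part the reduced grid has all odd parameters, so by White's lower-embeddable grids result it even admits an optimal embedding; however, the construction below only needs that \emph{some} embedding of the reduced grid on $S_g$ is given, where $g$ is its genus.

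For part (a), fix an embedding of $\Gamma(e_1,e_2,e_3-1)$ on $S_g$ with $g=\gamma(\Gamma(e_1,e_2,e_3-1))$, and let $T\cong\Gamma(e_1,e_2)$ be its top layer. I would adjoin a fresh copy $L\cong\Gamma(e_1,e_2)$ of the two-dimensional grid together with the perfect matching of ``vertical'' edges joining $T$ to $L$. Because $e_1$ and $e_2$ are odd, the numbers $e_1+1$ and $e_2+1$ are even, so the $(e_1+1)(e_2+1)$ vertices of $L$ partition cleanly into $\tfrac{(e_1+1)(e_2+1)}{4}$ blocks of $2\times 2$ vertices, each sitting over a corresponding square face of $T$. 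For each block I would route its four new vertices, its internal edges, the four vertical edges, and the connectors to neighbouring blocks by attaching a single handle; the one block incident to the outer face of $T$ can be realized without a handle. This uses $\tfrac{(e_1+1)(e_2+1)}{4}-1$ handles, each raising the genus by one, which yields the bound in \eqref{eq:upper_bound}.

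Parts (b) and (c) follow the same slab-attaching strategy, now adjoining two slabs (in the second and third coordinate directions) or three slabs (one per direction) to the reduced all-odd grid. The oddness of the retained parameters again makes the relevant vertex sets partition into $2\times 2$ blocks, and a block count via Euler's formula produces the totals $\tfrac{(e_1+1)(e_2+e_3)}{4}$ and $\tfrac{e_1e_2+e_1e_3+e_2e_3}{4}$, with a single handle economized overall by using the outer region once. As a consistency check, the resulting bounds are tight against the exact formula \eqref{eq:genus_formula_22} for $\Gamma(e_1,2,2)$, and they always exceed the lower bound \eqref{eq:lower_bound} by the expected nonnegative boundary correction.

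The main obstacle is the explicit local routing inside each block together with the verification that assembling the blocks produces a genuine $2$-cell embedding with a consistent rotation system and exactly the claimed number of handles; this is precisely where the parity hypotheses are essential. For parts (b) and (c) the difficulty is compounded because slabs added in different directions meet along shared edges, so the constructions cannot simply be iterated from part (a)---they must be carried out simultaneously so that only one handle is saved in total. I would confirm the final face counts with Euler's formula $\gamma = 1-\tfrac12\bigl(|V|-|E|+|F|\bigr)$ to certify that no block has been double counted and that the economized handle is legitimate.
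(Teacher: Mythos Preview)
The paper does not prove this proposition at all: it is stated with the attribution ``Proposition 3, \cite{MS22}'' and invoked as a black box, so there is no proof in the present paper against which to compare your attempt. Any assessment of your sketch would have to be made against the original argument in \cite{MS22}, not against anything here.

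That said, your outline is only a plan, not a proof. The heart of the matter---as you yourself flag---is the explicit local construction: specifying a rotation system on the enlarged graph, showing that each $2\times 2$ block can be absorbed by a single handle while remaining compatible with its neighbours, and verifying that exactly one handle can be economized via the outer face. None of this is carried out. In particular, for parts (b) and (c) you note that slabs in different directions share boundary edges and so the constructions must be performed simultaneously rather than iterated, but you do not indicate how this simultaneous attachment is organized or why only one handle (rather than one per slab) is saved. Until those routing details and the resulting face count are written down and checked with Euler's formula, the argument remains a heuristic rather than a proof.
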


While these formulas are useful, one sometimes needs to show that a graph has a lower bound for its genus by finding a graph minor with a particular genus. When one combines multiple copies of the same graph, e.g. $K_{3,3}$ one can use the block decomposition of a graph to compute its genus. We follow the discussion of block decompositions found in \cite{MS22}.

\begin{definition}
    A {\bf block} of a graph $\Gamma$ is a maximal $2$-connected subgraph $B$ of $\Gamma$. Given a connected graph $\Gamma$, there exists a unique collection of blocks $\mathcal{B}=\{B_1,\dots,B_k\}$ such that $\cup_{i=1}^k{B_i}=\Gamma$, which is called the {\bf block decomposition} of $\Gamma$.
\end{definition}

The following theorem connecting the genus of a graph to the genus of the blocks in its block decomposition is due to Battle, Harary, Kodama, and Youngs \cite{BHKY62}.

\begin{theorem}[Theorem 1, \cite{BHKY62}]\label{theorem:block}
If $\Gamma$ is a graph with block decomposition $\{B_1,\dots,B_k\}$, then $\gamma(\Gamma)=\sum_{i=1}^k{\gamma(B_i)}$.
\end{theorem}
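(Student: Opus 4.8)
The plan is to prove the theorem by reducing to a single cut vertex and then establishing additivity of the genus under one-vertex amalgamation. First I would use the block--cut-vertex tree: for a connected graph $\Gamma$ with blocks $B_1,\dots,B_k$ this structure is a tree, so if $k\ge 2$ some block, say $B_k$, is a leaf that meets $G_1:=\bigcup_{i<k}B_i$ in exactly one cut vertex $v$, giving $\Gamma=G_1\cup B_k$ with $G_1\cap B_k=\{v\}$. Granting the key identity that $\gamma(H_1\cup H_2)=\gamma(H_1)+\gamma(H_2)$ whenever $H_1\cap H_2$ is a single vertex, an induction on $k$ immediately yields $\gamma(\Gamma)=\gamma(G_1)+\gamma(B_k)=\sum_{i=1}^{k}\gamma(B_i)$. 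Thus everything reduces to this one-vertex amalgamation identity, which I would analyze through rotation systems, reading the genus of a (necessarily cellular) minimal embedding off its face count via Euler's formula $2-2\gamma=|V|-|E|+|F|$.

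Call a rotation system of $H_1\cup H_2$ \emph{split at $v$} if the darts at $v$ coming from $H_1$ and those coming from $H_2$ each form a single contiguous arc of the cyclic order. The upper bound is the easy half: given optimal embeddings of $H_1$ and $H_2$, I would amalgamate them by excising a small disk around $v$ in each surface and gluing along the resulting boundary circles, producing a split embedding of $H_1\cup H_2$ on the connected sum $S_{\gamma(H_1)}\# S_{\gamma(H_2)}=S_{\gamma(H_1)+\gamma(H_2)}$; the bookkeeping $|V|=|V_1|+|V_2|-1$, $|E|=|E_1|+|E_2|$, $|F|=|F_1|+|F_2|-1$ confirms the genus is exactly $\gamma(H_1)+\gamma(H_2)$. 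Running the same bookkeeping in reverse shows that splitting $v$ in \emph{any} split embedding returns embeddings of $H_1$ and $H_2$ whose genera sum to that of the embedding; since those genera are at least $\gamma(H_1)$ and $\gamma(H_2)$ respectively, the minimum genus attained over split rotation systems is exactly $\gamma(H_1)+\gamma(H_2)$.

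It therefore remains to show that the overall minimum genus equals the minimum over split systems, i.e. that an optimal embedding may be taken split at $v$. This is the heart of the argument and the step I expect to be hardest. Concretely, I would show that starting from any embedding one can make the rotation at $v$ split by a sequence of transpositions of cyclically adjacent darts, each chosen to reduce the number of $H_1$/$H_2$ alternations, and that no such move decreases the number of faces (equivalently, none raises the genus). The leverage is that $v$ is a cut vertex: a facial walk can pass between $H_1$ and $H_2$ only at $v$, so the way the two edge-groups interleave there is the sole influence on how the $H_1$-facial structure and the $H_2$-facial structure combine. A small instance is already instructive: for two bigons sharing a vertex, the split rotation yields three faces (genus $0$) whereas the fully interleaved rotation yields a single face (genus $1$), exactly the predicted drop. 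Making this monotonicity precise — that un-interleaving weakly increases the face count — is the crux; once it is in hand, $\gamma(H_1\cup H_2)$ equals the split minimum $\gamma(H_1)+\gamma(H_2)$, and block additivity follows by the induction above.
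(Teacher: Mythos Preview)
The paper does not supply a proof of this statement at all; it is quoted verbatim as Theorem~1 of Battle--Harary--Kodama--Youngs \cite{BHKY62} and used as a black box. So there is no in-paper argument to compare your attempt against.

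That said, your outline is the classical one and is sound in structure: reduce via the block--cut tree to amalgamation at a single cut vertex $v$, obtain $\gamma(H_1\cup H_2)\le\gamma(H_1)+\gamma(H_2)$ by embedding in the connected sum, and for the reverse inequality argue that a minimum-genus embedding of $H_1\cup H_2$ may be taken split at $v$. You correctly flag this last step as the crux. One technical caution for when you fill it in: a swap of two cyclically adjacent darts at $v$ replaces the face permutation $\varphi=\sigma\alpha$ by $(a\,b\,y)\circ\varphi$ for a suitable $3$-cycle, so the face count changes by $0$ or $\pm 2$; what you must show is that the $-2$ case never occurs for an alternation-reducing swap, and this is precisely where the cut-vertex hypothesis (every facial walk passes between $H_1$ and $H_2$ only at $v$) must be invoked. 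The original BHKY argument, and textbook treatments such as Mohar--Thomassen, organize this by moving whole maximal arcs of $H_1$-darts at once rather than one transposition at a time, which makes the face bookkeeping cleaner, but the underlying idea is the same as yours.
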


Using these results, we can prove the following theorem.

\begin{theorem}\label{theorem:cyclic_genus}
Let $G=\Zn$ be a cyclic group with subgroup lattice graph $\Gamma(G)$ where $n=p_1^{e_1}\dots p_k^{e_1}$ with $p_i$ distinct primes. Then $\gamma(\Gamma(G))=2$ if and only if
\begin{itemize}
    \item $k=3$ and $(e_1,e_2,e_3)\in\{(4,2,1),(4,3,1),(5,2,1),(5,3,1),(2,2,2)\}$ or
    \item $k=4$ and $(e_1,e_2,e_3,e_4)=(2,1,1,1)$.
\end{itemize}
We have $\gamma(\Gamma(G))=3$ if and only if
\begin{itemize}
    \item $k=3$ and $(e_1,e_2,e_3)\in\{(6,2,1),(6,3,1),(7,2,1),(7,3,1),(3,2,2)\}$ or
    \item $k=4$ and $(e_1,e_2,e_3,e_4)=(3,1,1,1)$.
\end{itemize}
We have $\gamma(\Gamma(G))=4$ if
\begin{itemize}
    \item $k=3$ and $$(e_1,e_2,e_3)\in\{((8,2,1),(8,3,1),(9,2,1),(9,3,1),(4,4,1),(5,4,1),(5,5,1),(4,2,2),(3,3,2)\}$$ or
    \item $k=4$ and $(e_1,e_2,e_3,e_4)=(4,1,1,1)$.
\end{itemize}
\end{theorem}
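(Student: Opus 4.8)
The plan is to prove Theorem \ref{theorem:cyclic_genus} by systematically combining the exact genus formulas and bounds from the earlier propositions, handling each family of grid graphs separately according to which exact result applies. I would organize the argument around the structure of $(e_1,\dots,e_k)$, since the subgroup lattice of $\Z_{p_1^{e_1}\cdots p_k^{e_k}}$ is the grid graph $\Gamma(e_1,\dots,e_k)$ by the lemma, reducing everything to a statement about grid graph genera. The overall strategy is: first dispose of the cases where an exact formula exists, then use the lower bound \eqref{eq:lower_bound} together with the upper bounds in Proposition \ref{prop:upper_bounds} to pin down the remaining cases, and finally invoke Proposition \ref{prop:cyclic_genus_one} to rule out any smaller genus and thereby establish the "only if" directions.

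First I would treat the $k=4$ cases, which are cleanest: for $(e_1,1,1,1)$ the formula \eqref{eq:genus_formula_n111} (with $n=e_1$) gives $\gamma(\Gamma(G))=e_1$ directly, so $(2,1,1,1),(3,1,1,1),(4,1,1,1)$ yield genus $2,3,4$ respectively. Next I would handle the $k=3$ families of the form $(e_1,e_2,1)$ using the exact formula \eqref{eq:genus_formula_1}, namely $\gamma(\Gamma(e_1,e_2,1))=\lfloor e_1/2\rfloor\lfloor e_2/2\rfloor$; a quick check shows $(4,2,1),(5,2,1),(4,3,1),(5,3,1)$ all give $2$, the pairs $(6,2,1),(7,2,1),(6,3,1),(7,3,1)$ give $3$, and $(8,2,1),(9,2,1),(8,3,1),(9,3,1),(4,4,1),(5,4,1)$ give $4$ while $(5,5,1)$ gives $4$ as well. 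The triples of the form $(e_1,2,2)$ are handled by the exact formula \eqref{eq:genus_formula_22}, giving $\gamma(\Gamma(e_1,2,2))=e_1$, which immediately settles $(2,2,2),(3,2,2),(4,2,2)$ as genus $2,3,4$.

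The main obstacle is the genuinely three-dimensional case $(3,3,2)$ claimed to have genus $4$, since no single exact formula applies: only one parameter is even, so I would use the lower bound \eqref{eq:lower_bound} to obtain $\gamma(\Gamma(3,3,2))\ge 4$ and the upper bound \eqref{eq:upper_bound} of Proposition \ref{prop:upper_bounds}(a), which gives
\[
\gamma(\Gamma(3,3,2))\le \gamma(\Gamma(3,3,1))+\frac{(3+1)(3+1)}{4}-1=\gamma(\Gamma(3,3,1))+3.
\]
Since \eqref{eq:genus_formula_1} gives $\gamma(\Gamma(3,3,1))=\lfloor 3/2\rfloor\lfloor 3/2\rfloor=1$, the upper bound is $4$, matching the lower bound and forcing $\gamma(\Gamma(3,3,2))=4$. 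The same pincer of lower and upper bounds is what I expect to require the most care, as each candidate triple with a single even parameter must have its lower bound from \eqref{eq:lower_bound} computed exactly and then matched against the recursive upper bound; in each case the reduction lands on an $(e_1,e_2,1)$ grid whose genus is known exactly from \eqref{eq:genus_formula_1}.

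Finally, to secure the "if and only if" for genus $2$ and $3$, I would argue that these are the complete lists by appealing to Proposition \ref{prop:cyclic_genus_one} (which enumerates all grids of genus $0$ or $1$) to rule out smaller values, and by using the monotonicity implicit in the exact formulas and the lower bound \eqref{eq:lower_bound} to show that every grid not on the stated list has genus strictly greater than the target; concretely, increasing any $e_i$ strictly increases the genus in each of the relevant formula families, so no additional tuple can sneak in at genus $2$ or $3$. The genus $4$ statement is phrased only as "if," so there I need only verify the forward computations above and omit the exhaustiveness argument.
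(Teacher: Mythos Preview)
Your overall organization mirrors the paper's proof closely, and all cases except $(3,3,2)$ are handled correctly via the same exact formulas. The genuine gap is precisely in the $(3,3,2)$ case: the Euler-type lower bound \eqref{eq:lower_bound} does \emph{not} give $\gamma(\Gamma(3,3,2))\ge 4$. Computing directly, $|V|=(3{+}1)(3{+}1)(2{+}1)=48$ and $|E|=3\cdot 4\cdot 3+3\cdot 4\cdot 3+2\cdot 4\cdot 4=104$, so
\[
1+\frac{|E|}{4}-\frac{|V|}{2}=1+26-24=3,
\]
which only yields $\gamma\ge 3$. Since your upper bound via \eqref{eq:upper_bound} is $4$, the pincer does not close and you have not established that the genus is $4$ rather than $3$. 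The paper recognizes this and instead exhibits an explicit minor of $\Gamma(3,3,2)$ that decomposes into four blocks, each a copy of $K_{3,3}$; Theorem~\ref{theorem:block} then forces the minor (and hence $\Gamma(3,3,2)$) to have genus at least $4$. You would need either this minor construction or some other ad hoc argument to fill the gap.

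A secondary issue: your ``only if'' sketch via monotonicity is too brief to be convincing as stated, because the exact formulas \eqref{eq:genus_formula_1}, \eqref{eq:genus_formula_22}, \eqref{eq:genus_formula_n111} each apply only to a restricted shape of tuple, and increasing an $e_i$ can move you out of one family into another where no exact formula holds. The paper therefore does a systematic case split (on $k$, then on $e_3$, then on $e_2$) and in several branches falls back on \eqref{eq:lower_bound} or \eqref{eq:genus_formula_odds} to push the genus to at least $5$; you would need to carry out the same kind of exhaustive check rather than appeal to a blanket monotonicity principle.
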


\begin{remark}
The above list of cyclic groups with genus $4$ subgroup lattice graphs is exhaustive with the possible exception of $\Z_{p_1^2p_2^2p_3p_4}$. Using \eqref{eq:lower_bound}, one can show it has genus at least $4$, and using other results from \cite{MS22}, one can show it has genus at most $6$. The exact genus is unknown to the author.
\end{remark}

\begin{proof}
Using equations \eqref{eq:genus_formula_1}, \eqref{eq:genus_formula_22}, and \eqref{eq:genus_formula_n111}, we can easily show that all the subgroup lattice graphs above have the specified genus with the exception of $G=\Z_{p_1^3p_2^3p_3^2}$ where $p_i$ are distinct primes. 

Using \eqref{eq:upper_bound}, we see that $\gamma(\Gamma(\Z_{p_1^3p_2^3p_3^2}))\le 4$. Now, we must show it is at least four. Below, we have drawn a graph that is combination of four $K_{3,3}$ graphs.

\begin{center}
\includegraphics[width=0.5\linewidth]{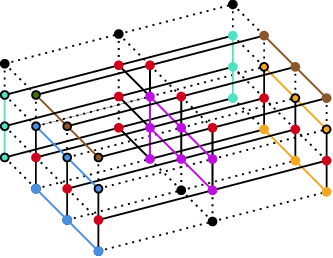}
\end{center}

Omit all the dotted edges and black vertices. Contract the blue, green, purple, brown, and orange paths to single vertices. Note that there are two separate blue paths, one with solid blue vertices and one with blue vertices outlined with a black circle. These paths are contracted to two different vertices. The same phenomenon occurs with the green, orange, and brown vertices. Doing so results in the following minor of $\Gamma(\Z_{p_1^3p_2^3p_3^2})$.

\begin{center}
\includegraphics[width=0.25\linewidth]{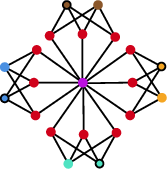}
\end{center}

Using Theorem \ref{theorem:block} and the fact that $\gamma(K_{3,3})=1$ we deduce that the genus of the above graph minor is $4$. Hence, we have deduced that $\gamma(\Gamma(\Z_{p_1^3p_2^3p_3^2}))=4.$

Now, we must show that cyclic groups not appearing in the list above or in the list from Proposition \ref{prop:cyclic_genus_one} have genus at least 5 with the exception of $\Gamma(\Z_{p_1^2p_2^2p_3p_4})$. For the remainder of the proof, suppose that $G$ is a cyclic group \underline{not} appearing in the list of groups in Proposition \ref{prop:cyclic_genus_one} or Theorem \ref{theorem:cyclic_genus}.

Suppose $k=3$ and $e_1\ge e_2\ge e_3$. 

If $e_3=1$ and $e_2=2$ or $3$, then $e_1\ge 10$ and thus $\gamma(\Gamma(G))\ge 5$ by \eqref{eq:genus_formula_1}. If $e_3=1$ and $e_2=4$ or $5$, then $e_1\ge 6$ and thus $\gamma(\Gamma(G))\ge 6$ by \eqref{eq:genus_formula_1}. If $e_3=1$ and $e_2\ge 6$ then $e_1\ge 6$ and thus $\gamma(\Gamma(G))\ge 9$ by \eqref{eq:genus_formula_1}.

If $e_3=2$ and $e_2=2$ then $e_1\ge 5$ and thus $\gamma(\Gamma(G))\ge 5$ by \eqref{eq:genus_formula_22}. If $e_3=2$ and $e_2\ge 3$ then $e_1\ge 4$ and thus $\gamma(\Gamma(G))>4$ by \eqref{eq:lower_bound}. 

If $e_3\ge 3$ then $e_1,e_2\ge 3$. Since $\gamma(\Gamma(\Z_{p_1^3p_2^3p_3^3}))=5$ using \eqref{eq:genus_formula_odds}, we deduce that $\gamma(\Gamma(G))\ge 5$.

Suppose $k=4$ and $e_1\ge e_2\ge e_3\ge e_4$. If $e_2=e_3=e_4=1$, then $e_1\ge 5$ and thus $\gamma(\Gamma(G))\ge 5$ by \eqref{eq:genus_formula_n111}. If $e_3=e_4=1$ and $e_2=2$ then $e_1\ge 3$ and thus $\gamma(\Gamma(G))\ge 6$ by \eqref{eq:lower_bound}. If $e_3=e_4=1$ and $e_2\ge 3$ then $e_1\ge 3$ and $\gamma(\Gamma(G))\ge 9$ by \eqref{eq:lower_bound}. If $e_3\ge 2$ then $e_1\ge e_2\ge 2$ and thus $\gamma(\Gamma(G))>7$ by \eqref{eq:lower_bound}.

Suppose $k=5$ and $e_1\ge e_2\ge e_3\ge e_4\ge e_5$. Then we know $\gamma(\Gamma(G))\ge 5$ since $\gamma(\Gamma(\Z_{p_1p_2p_3p_4p_5}))=5$ by \eqref{eq:genus_formula_hypercube}.
\end{proof}

Thus, with the exception of $\Z_{p_1^2p_2^2p_3p_4}$, we have completed the classification of cyclic groups with subgroup lattice graphs of genus up to $4$.

\section{Non-Cyclic Abelian Groups}

Now that we have classified all cyclic groups whose subgroup lattice graph has genus up to 4 with the exception of $\Z_{p_1^2p_2^2p_3p_4}$, we proceed with determining the non-cyclic abelian groups whose subgroup lattice has genus one. The following theorem classifies all non-cyclic abelian groups whose subgroup lattices have genus zero. The planar cyclic groups are described in the first two bullet points of Proposition \ref{prop:cyclic_genus_one}.

\begin{theorem}[Theorem 14, \cite{ST04}]\label{theorem:genus_zero}
Let $G$ be a finite abelian group. Then $\Gamma(G)$ is planar if and only if $G$ is isomorphic to a planar cyclic group or to $\Z_{p^a}\times \Z_p$ where $p$ is a prime and $a\in\N$.
\end{theorem}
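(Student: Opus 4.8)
The plan is to prove both directions by reducing, via the structure theorem for finite abelian groups, to a short list of ``forbidden'' subgroups together with one explicit positive construction. The crucial observation is that if $H\le G$, then the subgroup lattice of $H$ is exactly the interval $[\langle 1\rangle,H]$ in the lattice of $G$, and covering relations inside an interval agree with covering relations in the whole lattice; hence $\Gamma(H)$ is an induced subgraph of $\Gamma(G)$. Consequently nonplanarity is monotone under passing to subgroups: if some $H\le G$ has $\Gamma(H)$ nonplanar, then $\Gamma(G)$ is nonplanar. The entire forward direction will rest on producing such an $H$ whenever $G$ is not of the claimed form.

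For the forward direction I would isolate three forbidden subgroups, namely $\Z_p^3$, $\Z_{p^2}\times\Z_{p^2}$, and $\Z_p\times\Z_p\times\Z_q$ (with $p,q$ distinct primes), and argue that each has nonplanar subgroup lattice. Granting this, the classification of the non-cyclic planar $G$ is purely group-theoretic: decompose $G$ into Sylow subgroups. If $G$ is non-cyclic then some Sylow subgroup has rank at least $2$; avoiding $\Z_p^3$ forces every Sylow subgroup to have rank at most $2$, so the non-cyclic one is $\Z_{p^a}\times\Z_{p^b}$ with $a\ge b\ge 1$; avoiding $\Z_{p^2}\times\Z_{p^2}$ forces $b=1$; and avoiding $\Z_p\times\Z_p\times\Z_q$ forces $|G|$ to be a prime power. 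Hence a non-cyclic planar $G$ must be isomorphic to $\Z_{p^a}\times\Z_p$, while the cyclic planar groups are exactly those in the first two bullet points of Proposition~\ref{prop:cyclic_genus_one}.

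To see that the three forbidden subgroups are nonplanar for \emph{every} prime $p$ — which is the point, since Theorem~\ref{theorem:main} only settles the small primes — I would use Euler-type bounds rather than exhibiting explicit minors. Each of these Hasse diagrams is a graded bipartite graph (the rank of $H$ is $\log_p|H|$, and edges join only consecutive ranks), so it suffices to count vertices and edges and compare against the planarity bound $|E|\le \frac{g}{g-2}(|V|-2)$ valid for a bipartite planar graph of girth at least $g$. For $\Z_p^3$ the atom--coatom incidence graph is the point--line incidence graph of $PG(2,p)$, which has girth $6$ and violates the girth-$6$ bound for every $p\ge 2$; the vertex/edge counts for $\Z_{p^2}\times\Z_{p^2}$ (also girth $6$) and for $\Z_p\times\Z_p\times\Z_q$ (girth $4$, so $|E|\le 2|V|-4$) likewise exceed their bounds for all $p$. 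These become short computations once the number of subgroups of each order is written down.

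Finally, for sufficiency the cyclic case is immediate from Proposition~\ref{prop:cyclic_genus_one}, so the real content is a planar embedding of $\Gamma(\Z_{p^a}\times\Z_p)$. I would establish the structural claim that its subgroup lattice is a ``stack'' of $a$ copies of the diamond lattice $M_{p+1}$ (whose Hasse diagram is $K_{2,p+1}$), glued in a path with consecutive diamonds sharing a single edge: writing $G=\langle x\rangle\times\langle y\rangle$ with $|x|=p^a$ and $|y|=p$, the intervals $[\langle x^{p^{a-k+1}}\rangle,\langle x^{p^{a-k}},y\rangle]$ for $k=1,\dots,a$ are each isomorphic to $M_{p+1}$, and $D_k$ meets $D_{k+1}$ only in the edge joining $\langle x^{p^{a-k}}\rangle$ to $\langle x^{p^{a-k}},y\rangle$ while non-consecutive diamonds are disjoint. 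Since $K_{2,p+1}$ is planar and amalgamating planar graphs along a single edge preserves planarity, $\Gamma(\Z_{p^a}\times\Z_p)$ is planar. \textbf{The main obstacle} I anticipate is precisely this bookkeeping: enumerating the subgroups of $\Z_{p^a}\times\Z_p$ and of $\Z_{p^2}\times\Z_{p^2}$ and verifying their covering relations uniformly in $p$. Once that combinatorial structure is pinned down, both the planar embedding and the Euler-bound nonplanarity proofs are routine.
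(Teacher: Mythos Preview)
The paper does not prove Theorem~\ref{theorem:genus_zero}; it is quoted verbatim from \cite{ST04} and used as a black box. So there is no in-paper argument to compare against, and your proposal must stand on its own.

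Your overall architecture is correct and efficient: the interval/subgraph monotonicity, the three forbidden configurations $\Z_p^3$, $\Z_{p^2}\times\Z_{p^2}$, $\Z_p\times\Z_p\times\Z_q$, the Sylow reduction, and the diamond-stack description of $\Gamma(\Z_{p^a}\times\Z_p)$ (with planarity preserved under amalgamation along an edge) are all sound. The Euler/girth bound does dispatch $\Z_p^3$ and $\Z_p\times\Z_p\times\Z_q$ for every prime.

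There is, however, a genuine gap at $\Z_{p^2}\times\Z_{p^2}$. Its Hasse diagram does \emph{not} have girth $6$: writing $S\cong\Z_p\times\Z_p$ for the socle (the unique non-cyclic subgroup of order $p^2$), any two distinct atoms $A_1,A_2$ give the $4$-cycle $\{1\}-A_1-S-A_2-\{1\}$. With the correct girth $4$ the planarity bound is $|E|\le 2|V|-4$, but a direct count yields
\[
|V|=1+(p+1)+(p^2+p+1)+(p+1)+1=p^2+3p+5,\qquad |E|=2(p+1)+2(p+1)^2=2(p+1)(p+2)=2|V|-6,
\]
so the inequality is \emph{satisfied} for every $p$ (e.g.\ $p=2$ gives $|V|=15$, $|E|=24\le 26$) and proves nothing. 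You therefore need a different mechanism for this one case --- for instance an explicit $K_{3,3}$ (indeed $K_{3,p+1}$) minor, obtained by contracting each atom together with the $p$ cyclic order-$p^2$ subgroups lying over it, exactly as in the paper's later Proposition~\ref{prop:Zp2_Zp2}. With that patch, your outline goes through.
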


First, we examine a few families of graphs that will be needed to prove Theorem \ref{theorem:main}.

\begin{lemma}
    Let $G_n$ be the graph pictured below. 

    \begin{center}
    \includegraphics[width=0.25\linewidth]{"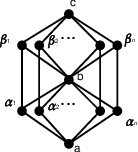"}
\end{center}
The genus of $G_n$ is $\gamma(G_n)=\lceil \frac{n-2}{4}\rceil$.
\end{lemma}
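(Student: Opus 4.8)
The plan is to prove the two inequalities $\gamma(G_n)\ge \lceil\frac{n-2}{4}\rceil$ and $\gamma(G_n)\le \lceil\frac{n-2}{4}\rceil$ separately, handling the ceiling by tracking $n$ modulo $4$. For the lower bound I would use exactly the block-decomposition machinery just set up. Reading the structure of $G_n$ off the figure, I would specify a sequence of edge deletions and contractions (contracting the long paths/rungs down to single vertices, as was done for $\Gamma(\Z_{p_1^3p_2^3p_3^2})$ in the previous proof) that produces a minor $M_n$ of $G_n$ whose block decomposition consists of precisely $\lceil\frac{n-2}{4}\rceil$ copies of $K_{3,3}$, any two of which meet only in a cut vertex. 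Since genus is minor-monotone, $\gamma(G_n)\ge\gamma(M_n)$, and Theorem \ref{theorem:block} together with $\gamma(K_{3,3})=1$ gives $\gamma(M_n)=\sum_i\gamma(B_i)=\lceil\frac{n-2}{4}\rceil$. As a cross-check, since $G_n$ is a Hasse-type graph and therefore triangle-free, the Euler-characteristic bound in the spirit of \eqref{eq:lower_bound}, namely $\gamma(G_n)\ge 1+\tfrac14|E(G_n)|-\tfrac12|V(G_n)|$, should reproduce the same value once $|V(G_n)|$ and $|E(G_n)|$ are read from the figure.

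For the upper bound I would induct on $n$ in steps of $4$, exhibiting an explicit embedding of $G_n$ into the orientable surface $S_{\lceil(n-2)/4\rceil}$. The base cases are the four residues $n\in\{2,3,4,5\}$: here $\lceil\frac{n-2}{4}\rceil$ equals $0$ (for $n=2$) or $1$ (for $n=3,4,5$), so I would draw $G_n$ directly on the sphere or on the torus. For the inductive step I would start from a fixed embedding of $G_n$ on $S_{\lceil(n-2)/4\rceil}$, locate a face incident to the part of $G_n$ that is extended when passing from $G_n$ to $G_{n+4}$, and route the new edges through a single handle attached inside that face. Because $\lceil\frac{(n+4)-2}{4}\rceil=\lceil\frac{n-2}{4}\rceil+1$, one new handle is exactly the correct budget, so this step raises the genus by precisely $1$ and closes the induction.

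The step I expect to be the main obstacle is the upper bound: producing the explicit handle-by-handle embedding and verifying that the four columns adjoined in passing from $G_n$ to $G_{n+4}$ can always be absorbed by a \emph{single} additional handle without disturbing the already-embedded portion. The delicate bookkeeping is tracking which faces remain available after each handle is glued on, so that the new edges can be drawn crossing-free. A secondary subtlety is the rounding at the three residues with $4\nmid(n-2)$: I must check that the partial final block still contains a full $K_{3,3}$ minor, which is what forces the ceiling rather than the floor in the lower bound, while simultaneously confirming that those leftover columns still embed using only $\lceil\frac{n-2}{4}\rceil$ handles in the upper bound. By contrast, once the $K_{3,3}$-block minor is identified, the lower bound is essentially forced.
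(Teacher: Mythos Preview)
Your lower-bound plan has a genuine gap. The graph $G_n$ consists of three hub vertices $a,b,c$ together with $\alpha_1,\dots,\alpha_n,\beta_1,\dots,\beta_n$, where $a$ is joined to every $\alpha_i$, $c$ to every $\beta_i$, $b$ to all $\alpha_i$ and all $\beta_i$, and there is a rung $\alpha_i\beta_i$ for each $i$. Any $K_{3,3}$ minor must use (branch sets containing) the three hubs as one colour class, because every other vertex has degree $3$ with only one neighbour outside $\{a,b,c\}$. Two such $K_{3,3}$'s therefore share all three hubs, and their union is $2$-connected---a single block, not two. Concretely, if you try to make $b$ a cut vertex by placing $a$ and $c$ on one side, the other side is just $b$ with triangles $b\alpha_i\beta_i$ hanging off it, which is planar and contains no $K_{3,3}$ at all. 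So for $n\ge 7$ there is no minor of $G_n$ whose block decomposition is $\lceil(n-2)/4\rceil$ copies of $K_{3,3}$ meeting only in cut vertices; the device from the $\Gamma(\Z_{p_1^3p_2^3p_3^2})$ argument does not transfer here. Your Euler cross-check also fails: $b\alpha_i\beta_i$ is a $3$-cycle for every $i$, so $G_n$ has girth $3$ and the inequality $\gamma\ge 1+|E|/4-|V|/2$ is not available. The paper's lower bound is a one-liner that bypasses both problems: contracting each rung $\alpha_i\beta_i$ yields exactly $K_{3,n}$, and one quotes the known value $\gamma(K_{3,n})=\lceil(n-2)/4\rceil$.

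For the upper bound your step-$4$ handle-attachment induction is plausible in principle, but the paper avoids the face-tracking you correctly flag as the main obstacle. It writes down, once and for all, an explicit list of face-bounding cycles (a rotation system) for the single residue class $n\equiv 2\pmod 4$, checks via Euler's formula that the resulting surface has genus exactly $(n-2)/4$, and then covers the remaining residues by the inclusion $G_n\subseteq G_m$ for the unique $m\in\{n+1,n+2,n+3\}$ with $m\equiv 2\pmod 4$, noting that $\lceil(m-2)/4\rceil=\lceil(n-2)/4\rceil$.
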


\begin{proof}
    The graph $G_n$ has a clear $K_{3,n}$ minor and $\gamma(K_{3,n})=\lceil \frac{n-2}{4}\rceil$. Hence, $\gamma(G_n)\ge \lceil\frac{n-2}{4}\rceil$. To demonstrate that $G_n$ can be drawn on a surface of genus $\lceil \frac{n-2}{4}\rceil$, we will construct a set of cycles that traverse each edge in both directions (e.g. define the faces of the embedding of the graph on a surface). We will first consider the case when $n\equiv 2 \mod 4$. The cycles are as follows:
    \begin{itemize}
        \item $(b,\alpha_1,\beta_1)$, $(b,\beta_2,\alpha_2)$, $(b,\alpha_3,\beta_3)$, $(b, \beta_4,\alpha_4)$, $\dots$, $(b,\beta_n,\alpha_n)$ 
        \item $(b,\beta_1,c,\beta_2)$, $(b,\beta_3,c,\beta_4)$, $\dots$, $(b,\beta_{n-1},c,\beta_n)$
        \item $(b,\alpha_2,a,\alpha_3)$, $(b,\alpha_4,a,\alpha_5)$, $\dots$, $(b,\alpha_n,a,\alpha_1)$ 
        \item $(c,\beta_1,\alpha_1,a,\alpha_{1+\frac{n}{2}},\beta_{1+\frac{n}{2}})$, $(c,\beta_3,\alpha_3,a,\alpha_{3+\frac{n}{2}},\beta_{3+\frac{n}{2}})$, $\dots$, $(c,\beta_{\frac{n}{2}},\alpha_{\frac{n}{2}},a,\alpha_n,\beta_n)$,\newline  $(c,\beta_{\frac{n}{2}+2},\alpha_{\frac{n}{2}+2},a,\alpha_2,\beta_2)$, $\dots$, $(c,\beta_{n-1},\alpha_{n-1},a,\alpha_{\frac{n}{2}-1},\beta_{\frac{n}{2}-1})$
    \end{itemize}

Using these cycles, observe that this embedding of the graph $G_n$ into some surface has $\frac{5n}{2}$ faces, $2n+3$ vertices, and $5n$ edges. Using Euler's formula, we deduce that $2-2g=\frac{5n}{2}-5n+2n+3=-\frac{n}{2}+3.$ This equation yields $g=\frac{n-2}{4}=\lceil \frac{n-2}{4}\rceil$ since $n\equiv 2\mod 4$. Observe that the fact that $n\equiv 2\mod 4$ is used inherently in the definition of the $6$-cycles in the fourth bullet point.

Thus we have proved our result in the case when $n\equiv 2\mod 4$. The other cases follow from the fact that $G_n$ is a subgraph of $G_{n+1}$, $G_{n+2}$ and $G_{n+2}$. If $n\not\equiv 2\mod 4$, there exists a unique integer $m\in\{n+1,n+2,n+3\}$ satisfying $m\equiv 2\mod 4$ with $\lceil\frac{n-2}{4}\rceil=\lceil\frac{m-2}{4}\rceil$.
\end{proof}

We then use this result to prove the genus of $\Gamma(\Z_{p^2}\times\Z_{p^2})$.

\begin{proposition}\label{prop:Zp2_Zp2}
    For a prime $p$, $\gamma(\Gamma(\Z_{p^2}\times \Z_{p^2}))=\lceil \frac{p-1}{4}\rceil$.
\end{proposition}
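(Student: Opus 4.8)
The plan is to identify the graph $\Gamma(\Z_{p^2}\times\Z_{p^2})$ explicitly and recognize it, after suppressing degree-two vertices, as the graph $G_{p+1}$ from the previous lemma with each ``diagonal'' edge thickened into $p$ parallel length-two paths. The previous lemma gives $\gamma(G_{p+1})=\lceil\frac{(p+1)-2}{4}\rceil=\lceil\frac{p-1}{4}\rceil$, which is exactly the target value, so the whole argument reduces to showing that the extra cyclic subgroups neither lower the genus below that of $G_{p+1}$ (a minor/subgraph argument) nor raise it above (an embedding argument).

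First I would determine the lattice structure of $G:=\Z_{p^2}\times\Z_{p^2}$. Counting gives $p+1$ subgroups of order $p$ (the lines of the $p$-torsion $E\cong\Z_p\times\Z_p$), $p+1$ subgroups of order $p^3$, and $p^2+p+1$ subgroups of order $p^2$, of which exactly one is elementary abelian (namely $E$ itself) and the remaining $p(p+1)$ are cyclic. Two features drive the proof: $E$ is adjacent in the Hasse diagram to all $p+1$ order-$p$ subgroups and to all $p+1$ order-$p^3$ subgroups, while each cyclic order-$p^2$ subgroup $C$ contains a unique order-$p$ subgroup and lies in a unique order-$p^3$ subgroup, making $C$ a degree-two vertex on a length-two path joining an order-$p$ subgroup to an order-$p^3$ subgroup. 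The hard part will be the precise combinatorics of these paths: I claim they realize a perfect matching between the order-$p$ and order-$p^3$ subgroups, with each matched pair linked by exactly $p$ internally disjoint such paths. For $C=\langle(a,b)\rangle$ of order $p^2$, its order-$p$ subgroup is recorded by $[a:b]\in\mathbb{P}^1(\F_p)$, and the unique order-$p^3$ subgroup containing $C$ is $\ker\phi$ for the functional $\phi$ annihilating $(a,b)$; since every homomorphism $G\to\Z_p$ factors through $G/pG\cong\F_p^2$, this functional depends only on $[a:b]$. Hence the over-group is a fixed function of the under-group, given by the bijection $[a:b]\mapsto[b:-a]$ of $\mathbb{P}^1(\F_p)$. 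After relabeling so this matching is the identity, the $p$ cyclic subgroups sitting above the $i$-th order-$p$ subgroup $H_i$ all rise to the same order-$p^3$ subgroup $K_i$.

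For the lower bound I would select a single cyclic subgroup per matched pair and discard the others. Sending the trivial subgroup to $a$, the whole group to $c$, $E$ to $b$, each $H_i$ to $\alpha_i$, each $K_i$ to $\beta_i$, and each retained path $H_i-C-K_i$ to the subdivided edge $\alpha_i\beta_i$ exhibits a subdivision of $G_{p+1}$ as a subgraph of $\Gamma(G)$. Since genus is invariant under subdivision and monotone under taking subgraphs, the previous lemma yields $\gamma(\Gamma(G))\ge\gamma(G_{p+1})=\lceil\frac{p-1}{4}\rceil$.

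For the upper bound I would start from an embedding of $G_{p+1}$ into $S_{\lceil(p-1)/4\rceil}$ supplied by the lemma. In that embedding the edge $\alpha_i\beta_i$ bounds a face incident to both endpoints, so the $p-1$ omitted cyclic subgroups over the pair $(H_i,K_i)$ can be reinstated as nested, once-subdivided parallel arcs drawn inside that face, each without introducing a crossing. Carrying this out for every $i$ realizes all of $\Gamma(G)$ on the same surface, giving $\gamma(\Gamma(G))\le\lceil\frac{p-1}{4}\rceil$ and hence the claimed equality. The only genuinely delicate point, and the step I would write up most carefully, is the matching structure established via the orthogonality computation above, since without it the thickening in the upper bound would not target a single edge per pair.
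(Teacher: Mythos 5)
Your proposal is correct and follows essentially the same route as the paper: both obtain the lower bound by locating a copy of $G_{p+1}$ inside $\Gamma(\Z_{p^2}\times \Z_{p^2})$ (you via a subdivision realized as a subgraph, the paper via a minor that contracts the cyclic order-$p^2$ vertices into the order-$p$ vertices), and both obtain the upper bound by locally thickening each edge $\alpha_i\beta_i$ in the genus-$\lceil \frac{p-1}{4}\rceil$ embedding of $G_{p+1}$ into the $p$ parallel length-two paths. Your explicit verification of the matching between order-$p$ and order-$p^3$ subgroups via the functional on $G/pG$ is a welcome piece of rigor that the paper leaves implicit in its figure.
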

\begin{proof}
The subgraph lattice graph of $\Z_{p^2}\times \Z_{p^2}$ is pictured below in Figure \ref{fig:Zp2_Zp2}.
\begin{figure}[!h]
    \includegraphics[width=0.25\linewidth]{"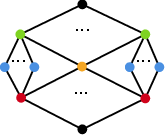"}
    \caption{Subgroup Lattice Graph of $\Z_{p^2}\times \Z_{p^2}$}
    \label{fig:Zp2_Zp2}
\end{figure}
The red vertices represent the $p+1$ subgroups of size $p$. The blue vertices represent the $p^2+p$ cyclic subgroups of size $p^2$ and the orange vertex represents the unique non-cyclic subgroup of size $p^2$. The green vertices represent the $p+1$ subgroups of size $p^3$.

It is easy to see that $\Gamma(\Z_{p^2}\times \Z_{p^2})$ has a $G_{p+1}$ minor by merging all the blue vertices with the respective red vertex they connect to. Hence, $\gamma(\Gamma(\Z_{p^2}\times \Z_{p^2}))\ge \lceil \frac{p-1}{4}\rceil$. It is also relatively easy to see that since $G_{p+1}$ embeds onto a surface of genus $\lceil \frac{p-1}{4}\rceil$, we can also embed $\Gamma(\Z_{p^2}\times \Z_{p^2})$ on the same surface. This is because one can cut out a small neighborhood of the edges $\alpha_1\beta_1,\dots \alpha_{p+1}\beta_{p+1}$ and replace it by the corresponding red vertex, green vertex, and $p$ blue vertices along with the edges that connect them. For example, see Figure \ref{fig:neighborhood1}.

\begin{figure}[!h]
    \includegraphics[width=0.25\linewidth]{"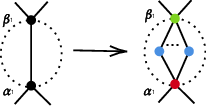"}
    \caption{}
    \label{fig:neighborhood1}
\end{figure}

Hence, we have proven that $\gamma(\Gamma(\Z_{p^2}\times \Z_{p^2}))=\lceil \frac{p-1}{4}\rceil$.
\end{proof}

\begin{lemma}
    Let $H_n$ be the graph pictured below. 

    \begin{center}
    \includegraphics[width=0.5\linewidth]{"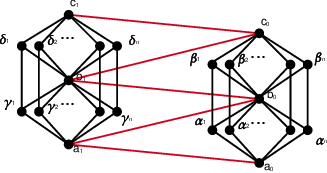"}
\end{center}
If $n$ is odd, the genus of $H_n$ is $\gamma(H_n)=2\lceil \frac{n-2}{4}\rceil$.
\end{lemma}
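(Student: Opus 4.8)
The plan is to prove matching lower and upper bounds, mirroring the strategy used for $G_n$ and for $\Gamma(\Z_{p^2}\times\Z_{p^2})$ in Proposition~\ref{prop:Zp2_Zp2}. For the lower bound I would exhibit a minor of $H_n$ that splits, under its block decomposition, into two blocks each having genus $\lceil\frac{n-2}{4}\rceil$. Concretely, I expect $H_n$ to contain two copies of $K_{3,n}$ meeting only in cut vertices, obtained by contracting the appropriate paths exactly as the colored paths were contracted in the $\Z_{p_1^3p_2^3p_3^2}$ argument. Since $\gamma(K_{3,n})=\lceil\frac{n-2}{4}\rceil$ and genus is additive over blocks by Theorem~\ref{theorem:block}, this minor has genus $2\lceil\frac{n-2}{4}\rceil$; as genus is minor-monotone, it follows that $\gamma(H_n)\ge 2\lceil\frac{n-2}{4}\rceil$. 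Note that the $K_{3,n}$ genus formula holds for all $n$, so the parity hypothesis should enter only through the upper bound.

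For the upper bound I would produce an explicit embedding on the surface $S_{2\lceil\frac{n-2}{4}\rceil}$, and two routes seem viable. The first is to write down a rotation system, i.e.\ a list of face-bounding cycles for $H_n$, and to verify through Euler's formula $2-2g=F-E+V$ that $g=2\lceil\frac{n-2}{4}\rceil$, just as was done for $G_n$. Here the hypothesis that $n$ is odd should play the role that $n\equiv 2\bmod 4$ played for $G_n$, namely it is the parity condition that makes the long face cycles close up consistently. The second, cleaner route is to realize $H_n$ as two copies of the $G_n$ graph amalgamated along a shared vertex or path, and then to glue the two genus-$\lceil\frac{n-2}{4}\rceil$ embeddings supplied by the previous lemma via a connected sum performed inside a common face; additivity of genus under one-point amalgamation then yields the bound.

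To reduce the bookkeeping I would first treat one convenient residue class of odd $n$ explicitly, namely the class for which $\frac{n-2}{4}$ behaves cleanly, and then extend to all odd $n$ using the fact that $H_n$ is a subgraph of $H_{n+4}$ while $\lceil\frac{n-2}{4}\rceil$ is constant across each window of four consecutive values. This is the same subgraph-monotonicity trick that finished the $G_n$ lemma.

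I expect the main obstacle to be the upper bound. Specifically, the delicate point is producing a valid set of face cycles, or a valid gluing of the two sub-embeddings, in which every edge is traversed exactly twice and the face count equals precisely $F = E - V + 2 - 4\lceil\frac{n-2}{4}\rceil$. Ensuring that the two $G_n$-like halves embed simultaneously without their faces interfering, and that the parity coming from ``$n$ odd'' is exactly what permits the required closing-up of the six-cycle-type faces across both halves, is where the real work lies; by contrast, the lower bound should be routine once the two-copy $K_{3,n}$ minor has been identified.
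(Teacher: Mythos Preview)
Your proposal matches the paper's proof almost exactly: the paper obtains the lower bound by observing that $H_n$ contains two copies of $G_n$ joined by a handful of extra edges, deletes enough of those edges to make the two copies into blocks, and applies Theorem~\ref{theorem:block}; for the upper bound it follows your route~(a), writing down an explicit face system when $n\equiv 1\pmod 4$ and checking Euler's formula.

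Two small corrections are in order. First, your route~(b) does not quite apply: $H_n$ is not a one-point amalgam of two copies of $G_n$ but rather two disjoint copies joined by five additional ``red'' edges, so additivity under one-point amalgamation is not directly available and one would still need to argue that all five cross-edges can be routed in a common face of the glued embedding. Second, your subgraph-monotonicity step should use $H_n\subset H_{n+2}$, not $H_{n+4}$: passing from $n$ to $n+4$ moves you into the next window and increases $\lceil\frac{n-2}{4}\rceil$ by one, whereas $H_n\subset H_{n+2}$ keeps the ceiling constant precisely when $n\equiv 3\pmod 4$, which is exactly the residue class left over after the explicit $n\equiv 1\pmod 4$ construction.
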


\begin{proof}
    Observe that $H_n$ contains two copies of $G_n$. By deleting four of the red edges and using Theorem \ref{theorem:block}, one deduces that $\gamma(H_n)\ge 2\lceil \frac{n-2}{4}\rceil$.

    Now, we consider the case when $n\equiv 1\mod 4$. To demonstrate that $H_n$ can be drawn on a surface of genus $2\lceil \frac{n-2}{4}\rceil$, we will construct a set of cycles that traverse each edge in both directions exactly once. The cycles are as follows:
    \begin{enumerate}[(a)]
    \item $(b_0,\alpha_1,\beta_1)$, $(b_0,\beta_2,\alpha_2)$, \dots, $(b_0,\beta_{n-1},\alpha_{n-1})$, $(b_0,\alpha_n,\beta_n)$
    \item $(b_1,\delta_1,\gamma_1)$, $(b_1,\gamma_2,\delta_2)$, \dots, $(b_1,\gamma_{n-1},\delta_{n-1})$, $(b_1,\delta_n,\gamma_n)$
    \item $(b_0,\beta_1,c_0,\beta_2)$, $(b_0,\alpha_2,a_0,\alpha_3)$, \dots, $(b_0,\beta_{n-2},c_0,\beta_{n-1})$, $(b_0,\alpha_{n-1},a_0,\alpha_n)$
    \item $(b_1,\delta_2,c_1,\delta_1)$, $(b_1,\gamma_3,a_1,\gamma_2)$, \dots, $(b_1,\delta_{n-2},c_1,\delta_{n-1})$, $(b_1,\gamma_n,a_1,\gamma_{n-1})$
    \item $(c_0,\beta_1,\alpha_1,a_0,\alpha_{(n+3)/2},\beta_{(n+3)/2})$, $(a_0,\alpha_2,\beta_2,c_0,\beta_{(n+5)/2},\alpha_{(n+5)/2})$, \dots, \newline$(c_0,\beta_{(n-3)/2},\alpha_{(n-3)/2},a_0,\alpha_{n-1},\beta_{n-1})$, $(a_0,\alpha_{(n-1)/2},\beta_{(n-1)/2},c_0,\beta_n,\alpha_n)$
    \item $(a_1,\gamma_1,\delta_1,c_1,\delta_{(n+3)/2},\gamma_{(n+3)/2})$, $(c_1,\delta_2,\gamma_2,a_1,\gamma_{(n+5)/2},\delta_{(n+5)/2})$, \dots, \newline $(a_1, \gamma_{(n-3)/2},\delta_{(n-3)/2},c_1,\delta_{n-1},\gamma_{n-1})$, $(c_1,\delta_{(n-1)/2},\gamma_{(n-1)/2},a_1,\gamma_n,\delta_n)$
    \item $(b_0,a_a,a_0,\alpha_1)$, $(b_0,\beta_n,c_0,b_1)$, $(b_0,b_1,\gamma_1,a_1)$, $(c_1,\delta_n, b_1,c_0)$
    \item $(c_0, \beta_{(n+1)/2},\alpha_{(n+1)/2},a_0,a_1,\gamma_{(n+1)/2},\delta_{(n+1)/2},c_1)$    
    \end{enumerate}

    Using these cycles, observe that this embedding of the graph $H_n$ into some surface has $5n+2$ faces, $4n+6$ vertices, and $10n+5$ edges. Using Euler's formula, we deduce that $2-2g=5n+2-(10n+5)+4n+6=-n+3.$ This equation yields $g=\frac{n-1}{2}=2\lceil \frac{n-2}{4}\rceil$ since $n\equiv 1\mod 4$. Observe that we inherently used the fact that $n\equiv 1\mod 4$ when we constructed the $6$-cycles in bullets (e) and (f) since we need $\frac{n-1}{2}$ to be even to have an equal number of cycles beginning with $c_0$ versus $a_0$ (or $a_1$ versus $c_1$) in our list of $6$-cycles.
    
    Thus we have proved our result in the case when $n\equiv 1\mod 4$. The case when $n\equiv 3\mod 4$ follows from the fact that $H_n$ is a subgraph of $H_{n+2}$.
\end{proof}

\begin{proposition}\label{prop:Zp3_Zp2}
    For an odd prime $p$, $\gamma(\Gamma(\Z_{p^3}\times \Z_{p^2}))=2\lceil \frac{p-2}{4}\rceil$. When $p=2$, $\gamma(\Gamma(\Z_{8}\times \Z_{4}))=1$.
\end{proposition}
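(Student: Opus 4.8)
The plan is to mirror the proof of Proposition~\ref{prop:Zp2_Zp2} almost verbatim, replacing the single fan $G_{p+1}$ by the double fan $H_p$. First I would record the structure of $\Gamma(\Z_{p^3}\times\Z_{p^2})$ layer by layer, producing a colour-coded Hasse diagram analogous to Figure~\ref{fig:Zp2_Zp2}: there are $p+1$ subgroups of order $p$ and (dually) $p+1$ of order $p^4$; the order-$p^2$ layer consists of the unique non-cyclic socle together with $p^2+p$ cyclic subgroups; and (dually) the order-$p^3$ layer consists of $p+1$ subgroups of type $\Z_{p^2}\times\Z_p$ together with $p^2$ cyclic subgroups. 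The structural fact I would isolate is that every cyclic subgroup of order $p^3$ lies above one common order-$p$ subgroup and that the cyclic towers through it split into exactly $p$ bundles; this is what produces the parameter $n=p$, and since $p$ is odd the lemma computing $\gamma(H_n)$ applies.

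For the lower bound I would exhibit $H_p$ as a minor. Concretely, I would contract the chains of prime-index covers running through the cyclic $p^2$- and $p^3$-subgroups onto the extremal subgroups they connect, so that what remains is a copy of $H_p$: a bottom fan anchored at $\langle 1\rangle$ and the socle, and a top fan obtained from it by the order-reversing self-duality of the subgroup lattice of a finite abelian group (interchanging the socle with the Frattini subgroup $pG$ and $\langle 1\rangle$ with the whole group). Since a graph minor cannot raise the genus, the lemma $\gamma(H_p)=2\lceil\frac{p-2}{4}\rceil$ yields $\gamma(\Gamma(\Z_{p^3}\times\Z_{p^2}))\ge 2\lceil\frac{p-2}{4}\rceil$; equivalently one could split off the two $K_{3,p}$ blocks and combine $\gamma(K_{3,p})=\lceil\frac{p-2}{4}\rceil$ with Theorem~\ref{theorem:block}.

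For the matching upper bound I would take the explicit genus-$2\lceil\frac{p-2}{4}\rceil$ embedding of $H_p$ furnished by that same lemma and undo the contraction by local surgery, exactly as in Proposition~\ref{prop:Zp2_Zp2}. Around each edge of $H_p$ that arose from contracting a cyclic chain I would excise a small disk and glue in the corresponding fattened gadget (the order-$p$, order-$p^2$, order-$p^3$ subgroups and their covering edges); because each gadget is planar and embeds in a disk with its attachment vertices on the boundary, no new handles are needed and $\Gamma(\Z_{p^3}\times\Z_{p^2})$ embeds in the same surface, giving $\gamma\le 2\lceil\frac{p-2}{4}\rceil$. I expect the \emph{main obstacle} to lie precisely in these two bookkeeping steps: pinning down the contraction that returns $H_p$ rather than a strictly larger graph, and verifying that every reinserted gadget is disk-embeddable with its attachment vertices occurring in the cyclic order dictated by the chosen rotation system of $H_p$.

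Finally, $p=2$ must be treated separately, since the formula predicts genus $0$ while $H_n$ is undefined for even $n$. Here I would note that $\Z_8\times\Z_4=\Z_{2^3}\times\Z_{2^2}$ is neither a planar cyclic group nor of the form $\Z_{p^a}\times\Z_p$, so Theorem~\ref{theorem:genus_zero} forces $\gamma\ge 1$; then I would exhibit an explicit toroidal embedding of its $22$-vertex subgroup lattice graph by a rotation system (or an explicit face list, as in the $G_n$ and $H_n$ lemmas), concluding $\gamma(\Gamma(\Z_8\times\Z_4))=1$.
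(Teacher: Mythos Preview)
Your proposal is correct and follows essentially the same route as the paper: describe the layer structure of $\Gamma(\Z_{p^3}\times\Z_{p^2})$, contract the cyclic fans to exhibit an $H_p$ minor for the lower bound, then reverse the contraction by planar disk surgery on the genus-$2\lceil\frac{p-2}{4}\rceil$ embedding of $H_p$ for the matching upper bound, and finally treat $p=2$ by invoking Theorem~\ref{theorem:genus_zero} together with an explicit toroidal drawing. The paper's proof is in fact rather less explicit than yours about the contraction and the gadget reinsertion (it simply asserts both are ``relatively easy to see''), so the bookkeeping you flag as the main obstacle is exactly what the paper leaves to the reader; one small correction is that $H_n$ is defined for all $n$, it is only the genus formula that is proved for odd $n$.
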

\begin{proof}
The subgroup lattice of $\Z_{p^3}\times \Z_{p^2}$ is pictured below in Figure \ref{fig:Zp3_Zp2}.

\begin{figure}[!h]
    \includegraphics[width=0.5\linewidth]{"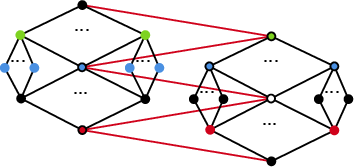"}
    \caption{Subgroup Lattice Graph of $\Z_{p^3}\times \Z_{p^2}$}
    \label{fig:Zp3_Zp2}
\end{figure}

The red vertices represent the $p+1$ subgroups of size $p$, and the red vertex with a black outline is the subgroup generated by $(p^2,0)$. The black vertices in the third row from the bottom represent the $p(p+1)$ cyclic subgroups of size $p^2$, and the white vertex represents the unique non-cyclic subgroup of size $p^2$. The blue vertices without a black outline represent the $p^2$ cyclic subgroups of size $p^3$ and the $p+1$ blue vertices with a black outline represent the $p+1$ non-cyclic subgroups of size $p^3$. Lastly, the green vertices are the $p+1$ subgroups of size $p^4$, and the green vertex outlined in black unique subgroup of size $p^4$ that is entirely $p^2$ torsion.

It is relatively easy to see that $\Gamma(\Z_{p^3}\times \Z_{p^2})$ has a $H_p$ minor. Hence, $\gamma(\Gamma(\Z_{p^3}\times \Z_{p^2}))\ge 2\lceil\frac{p-2}{4}\rceil$.  It is also relatively easy to see that since $H_n$ embeds onto a surface of genus $2\lceil \frac{p-2}{4}\rceil$, we can embed $\Gamma(\Z_{p^3}\times \Z_{p^2})$ on the same surface. This is because one can cut out a small neighborhood of the edges $\gamma_1\delta_1,\dots \gamma_{p+1}\delta_{p+1}$ and replace it by the corresponding black vertex, green vertex, and $p$ blue vertices along with the edges that connect them. For example, see Figure \ref{fig:neighborhood2} below.

\begin{figure}[!h]
    \includegraphics[width=0.25\linewidth]{"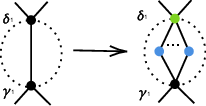"}
    \caption{}
    \label{fig:neighborhood2}
\end{figure}

A similar operation allows one to cut out a small neighborhood of the edges $\alpha_1\beta_1,\dots \alpha_{p+1}\beta_{p+1}$ and replace it by the corresponding red vertex, blue vertex, and $p$ black vertices along with the edges that connect them.

Now consider the case when $p=2$. We know that $\gamma(\Gamma(\Z_{8}\times \Z_{4}))\ge 1$ by Theorem \ref{theorem:genus_zero}. To demonstrate that $\gamma(\Gamma(\Z_{8}\times \Z_{4}))=1$, we simply need to show that we can draw $\Gamma(\Z_8\times \Z_4)$ on the 1-torus which we do here in Figure \ref{fig:Z8_Z4}. 

\begin{figure}[!h]
    \includegraphics[width=0.35\linewidth]{"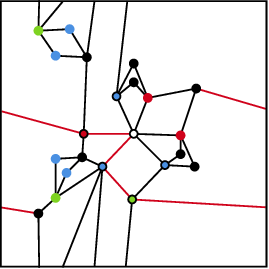"}
    \caption{Subgroup Lattice of $\Z_8\times \Z_4$ Drawn on the Torus}
    \label{fig:Z8_Z4}
\end{figure}
\end{proof}

\begin{proposition}\label{prop:Zp_Zp_Zq}
 The genus of $\Gamma(\Z_p\times \Z_p\times \Z_q)$ for primes $p\ne q$ is $\lceil \frac{p-1}{2}\rceil$.
\end{proposition}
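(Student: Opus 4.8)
The plan is to identify the graph concretely, obtain the lower bound by contracting it to a complete bipartite graph, and obtain the matching upper bound by reversing that contraction inside a minimum-genus embedding. First I would exploit coprimality: since $|\Z_p\times\Z_p| = p^2$ and $|\Z_q| = q$ are relatively prime, every subgroup of $\Z_p\times\Z_p\times\Z_q$ splits as $H\times K$ with $H\le \Z_p\times\Z_p$ and $K\in\{0,\Z_q\}$, and the subgroup lattice is the direct product of the two factor lattices. Consequently $\Gamma(\Z_p\times\Z_p\times\Z_q)$ is the Cartesian graph product $\Gamma(\Z_p\times\Z_p)\,\square\,\Gamma(\Z_q)$. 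As $\Gamma(\Z_q)=K_2$ and $\Gamma(\Z_p\times\Z_p)\cong K_{2,p+1}$ (with one part the trivial subgroup together with the full group, the other part the $p+1$ subgroups of order $p$), we get $\Gamma=K_{2,p+1}\,\square\,K_2$: two copies of $K_{2,p+1}$, a ``lower'' copy $L$ (subgroups inside $\Z_p\times\Z_p\times\{0\}$) and an ``upper'' copy $U$ (their products with $\Z_q$), joined by a perfect matching. I label the four hub vertices $0_L,T_L,0_U,T_U$ and the $p+1$ matched middle pairs $A_{i,L}\text{--}A_{i,U}$.

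For the lower bound I would contract each of the $p+1$ matching edges $A_{i,L}A_{i,U}$. The merged vertex $A_i$ is then adjacent to all four hubs, so the minor is $K_{4,p+1}$ together with the two extra hub edges $0_L0_U$ and $T_LT_U$; in particular it contains $K_{4,p+1}$. By the genus formula for complete bipartite graphs, $\gamma(K_{4,p+1})=\lceil (4-2)(p+1-2)/4\rceil=\lceil (p-1)/2\rceil$, and since genus is minor-monotone this yields $\gamma(\Gamma)\ge\lceil\frac{p-1}{2}\rceil$.

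For the upper bound, note that when $p$ is odd, $n=p+1$ is even, so $(4-2)(n-2)=2(n-2)\equiv 0\pmod 4$ and $K_{4,p+1}$ admits a \emph{quadrilateral} embedding of genus $\frac{p-1}{2}$. Starting from such an embedding I would reverse the contraction: at each vertex $A_i$, split it into $A_{i,L}$ carrying the two edges to $0_L,T_L$ and $A_{i,U}$ carrying the two edges to $0_U,T_U$, reconnected by the matching edge (genus-free precisely when the two ``lower'' edges are consecutive in the rotation at $A_i$), and then draw the hub edges $0_L0_U$ and $T_LT_U$ through faces on which those hub pairs already lie. If both surgeries cost no handle, the result is an embedding of $\Gamma$ on $S_{(p-1)/2}$, meeting the lower bound. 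The case $p=2$ ($n=3$ odd, no quadrilateral embedding) I would handle by an explicit toroidal drawing exactly as $\Z_8\times\Z_4$ was treated in Figure~\ref{fig:Z8_Z4}, with $\gamma\ge 1$ supplied by Theorem~\ref{theorem:genus_zero}. As an alternative more in the spirit of the $G_n$ and $H_n$ lemmas, one can instead list explicit face-tracing cycles for $\Gamma$ and verify via Euler's formula that $V=2p+6$, $E=5p+7$, and $F=2p+4$, forcing $2-2g=(2p+6)-(5p+7)+(2p+4)=3-p$ and hence $g=\frac{p-1}{2}$.

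The main obstacle is the upper bound, and specifically the compatibility of the two surgeries: one must guarantee that the minimum-genus embedding of $K_{4,p+1}$ can be chosen so that every $Y$-vertex has its two lower-hub edges consecutive in rotation (making the vertex split free) and so that the hub pairs $\{0_L,0_U\}$ and $\{T_L,T_U\}$ each bound a common face (making the two hub edges insert free). A generic minimum embedding need not satisfy either property, so one either selects a structured embedding of $K_{4,n}$ and checks these local and global rotation conditions directly, or bypasses them by writing down an explicit rotation system for $\Gamma$ and confirming the face count $2p+4$ uniformly in odd $p$, with parity bookkeeping in the longer face cycles analogous to the $\frac{n}{2}$- and $\frac{n\pm k}{2}$-indexed cycles appearing in the $G_n$ and $H_n$ proofs.
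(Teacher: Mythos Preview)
Your lower bound is exactly the paper's: contract the $p+1$ matching edges between the two copies of $K_{2,p+1}$, delete the two hub--hub edges, and read off $\gamma(K_{4,p+1})=\lceil(p-1)/2\rceil$. Your identification of $\Gamma$ as $K_{2,p+1}\,\square\,K_2$ is a clean way to say what the paper draws in a picture.

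For the upper bound, your \emph{alternative} route---write down an explicit rotation system and check via Euler's formula that $V=2p+6$, $E=5p+7$, $F=2p+4$---is precisely what the paper does: it lists six families of face cycles (two families of $4$-cycles and two of $6$-cycles indexed mod $p+1$, plus a handful of short cycles absorbing the edges $ac$ and $bd$) and lands on $2-2g=3-p$. So if you pursue that alternative you reproduce the paper's argument verbatim, and your parity remark about the $6$-cycles anticipates exactly the bookkeeping the paper needs.

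Your \emph{primary} route, reversing the contraction inside a quadrilateral embedding of $K_{4,p+1}$, is a different idea and is where you correctly flag the real obstacle. You would need a single embedding in which (i) at every $A_i$ the edges to $0_L,T_L$ are rotation-consecutive, and (ii) after all $p+1$ splits there remain faces incident to both $0_L,0_U$ and to both $T_L,T_U$. Neither is automatic from the mere existence of a quadrilateral embedding, and you do not supply such an embedding, so as written this branch is incomplete. The paper sidesteps this entirely by exhibiting the rotation system directly.

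For $p=2$ the paper is slightly slicker than your proposed explicit drawing: it observes that $\Gamma(\Z_2\times\Z_2\times\Z_q)$ is a subgraph of $\Gamma(\Z_3\times\Z_3\times\Z_q)$, which the odd-$p$ argument has just embedded on the torus, and combines this with Theorem~\ref{theorem:genus_zero} for the lower bound. Your approach would also work but costs an extra figure.
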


\begin{proof}
The subgroup lattice of $\Z_p\times \Z_p\times \Z_q$ is pictured below in Figure \ref{fig:Zp_Zp_Zq}. 

\begin{figure}[!h]
    \includegraphics[width=0.25\linewidth]{"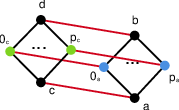"}
    \caption{Subgroup Lattice Graph of $\Z_p\times \Z_p\times \Z_q$}
    \label{fig:Zp_Zp_Zq}
\end{figure}

The blue vertices represent the $p+1$ subgroups of size $p$, the $a$ vertex represents the trivial subgroup, the $b$ vertex represents the unique subgroup of size $p^2$, the $c$ vertex represents the unique subgroup of size $q$, the green vertices represent the $p+1$ subgroups of size $pq$ and the vertex $d$ is the full group.

If one contracts the edges $0_a0_c,\dots, p_ap_c$ and deletes edges $bd$ and $ac$, then one obtains a $K_{4,p+1}$ graph with genus $\lceil \frac{(4-2)(p+1-2)}{4}\rceil=\lceil \frac{p-1}{2}\rceil$. Hence, $\gamma(\Gamma(\Z_p\times \Z_p\times \Z_q))\ge \lceil \frac{p-1}{2}\rceil$.

First, we will prove the case when $p$ is an $odd$ prime.  To show that we can embed the subgroup lattice on a surface of genus $\lceil \frac{p-1}{2}\rceil$, we will exhibit a set of cycles that traverse each edge in both directions exactly once. The cycles are as follows:
    \begin{itemize}
        \item $(a,0_a,b,1_a)$, $(a,2_a,b,3_a)$, $\dots$, $(a,(p-1)_a,b,p_a)$
        \item $(c,1_c,d,0_c)$, $(c,3_c,d,2_c)$, $\dots$, $(c,p_c,d, (p-1)_c)$
        \item $(a,c,0_c,0_a)$, $(a,p_a,p_c,c)$
        \item $(b,d,1_c,1_a)$, $(b,2_a,2_c,d)$
        \item $(a,1_a,1_c,c,2_c,2_a)$, $(a,3_a,3_c,c,4_c,4_a)$, $\dots$, $(a,(p-2)_a,(p-2)_c,c,(p-1)_c,(p-1)_a)$
        \item $(b,0_a,0_c,d,p_c,p_a)$, $(b,4_a,4_c,d,3_c,3_a)$, $\dots$, $(b,(p-1)_a,(p-1)_c,d,(p-2)_c,(p-2))a)$
    \end{itemize}

Using these cycles, observe that this embedding of $\Gamma(\Z_p\times \Z_p\times \Z_q)$ into some surface has $2p+4$ faces, $2(p+3)$ vertices, and $5p+7$ edges. Using Euler's formula, we deduce that $2-2g=2p+4-(5p+7)+2p+6=-p+3$. This equation yields $g=\frac{p-1}{2}=\lceil \frac{p-1}{2}\rceil$ since $p$ is an odd prime. Hence, we have shown that $\gamma(\Gamma(\Z_p\times \Z_p\times \Z_q))=\lceil \frac{p-1}{2}\rceil$ when $p$ is an odd prime. 

When $p=2$, we know that $\gamma(\Gamma(\Z_2\times \Z_2\times \Z_q))>0$ by Theorem \ref{theorem:genus_zero}. Since, $\Gamma(\Z_2\times \Z_2\times \Z_q)$ is a subgraph of $\Gamma(\Z_3\times \Z_3\times \Z_q)$, we deduce that  $\gamma(\Gamma(\Z_2\times \Z_2\times \Z_q))=1=\lceil \frac{2-1}{2}\rceil$.
\end{proof}

Now, we compile our results to prove that all the graphs in the statement of Theorem \ref{theorem:main} have genus one.

\begin{proposition}\label{prop:non_cyclic_genus_one}
The following groups have subgroup lattices with genus one.
\begin{itemize}
    \item $G\in\{\Z_4\times \Z_4,\Z_8\times \Z_4,\Z_2\times \Z_2\times \Z_p,\Z_4\times \Z_2\times \Z_q \}$ where $q\ne 2$ is prime, or
    \item $G\in\{\Z_9\times \Z_9, \Z_3\times \Z_3\times \Z_q \}$ where $q\ne 3$ is prime, or
    \item $G=\Z_{25}\times \Z_{25}$.
\end{itemize}
\end{proposition}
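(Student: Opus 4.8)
The plan is to settle six of the seven families by directly specializing the three genus formulas already proved, and to handle the remaining family $\Z_4\times\Z_2\times\Z_q$ separately. Taking $p=2,3,5$ in Proposition~\ref{prop:Zp2_Zp2} gives
\[
\gamma(\Gamma(\Z_4\times\Z_4))=\left\lceil\tfrac{1}{4}\right\rceil=1,\quad
\gamma(\Gamma(\Z_9\times\Z_9))=\left\lceil\tfrac{2}{4}\right\rceil=1,\quad
\gamma(\Gamma(\Z_{25}\times\Z_{25}))=\left\lceil\tfrac{4}{4}\right\rceil=1.
\]
The $p=2$ case of Proposition~\ref{prop:Zp3_Zp2} records that $\gamma(\Gamma(\Z_8\times\Z_4))=1$, and taking $p=2$ and $p=3$ in Proposition~\ref{prop:Zp_Zp_Zq} yields $\gamma(\Gamma(\Z_2\times\Z_2\times\Z_q))=\lceil\tfrac{1}{2}\rceil=1$ and $\gamma(\Gamma(\Z_3\times\Z_3\times\Z_q))=\lceil\tfrac{2}{2}\rceil=1$. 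Thus every family except $\Z_4\times\Z_2\times\Z_q$ is an immediate consequence of earlier results.

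For the lower bound in the remaining case, observe that $\Z_4\times\Z_2\times\Z_q$ is neither cyclic (it contains $\Z_2\times\Z_2$) nor of the form $\Z_{p^a}\times\Z_p$ (it involves the two distinct primes $2$ and $q$), so Theorem~\ref{theorem:genus_zero} shows $\Gamma(\Z_4\times\Z_2\times\Z_q)$ is nonplanar and hence has genus at least $1$.

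The work lies in the upper bound. First I would describe the graph: since $\gcd(8,q)=1$, every subgroup of $\Z_4\times\Z_2\times\Z_q$ splits uniquely as a product of a subgroup of $\Z_4\times\Z_2$ with a subgroup of $\Z_q$, and covering relations are precisely those of the product poset. Because $\Gamma(\Z_q)\cong K_2$, this identifies
\[
\Gamma(\Z_4\times\Z_2\times\Z_q)\cong \Gamma(\Z_4\times\Z_2)\,\square\,K_2,
\]
the graph Cartesian product of the planar $8$-vertex, $11$-edge graph $\Gamma(\Z_4\times\Z_2)$ with an edge; concretely, two copies of $\Gamma(\Z_4\times\Z_2)$ joined by a perfect matching. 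Counting gives $|V|=16$ and $|E|=2\cdot 11+8=30$, so by Euler's formula a toroidal embedding must have exactly $|E|-|V|=14$ faces. I would then exhibit one directly, either by drawing $\Gamma(\Z_4\times\Z_2\times\Z_q)$ on the $1$-torus in the style of Figure~\ref{fig:Z8_Z4}, or by listing $14$ closed walks covering each edge exactly twice and checking that the resulting rotation system is orientable. This gives $\gamma(\Gamma(\Z_4\times\Z_2\times\Z_q))\le 1$, which together with the lower bound yields genus exactly $1$.

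The first paragraph is routine substitution and needs no new ideas. The genuine obstacle is the explicit toroidal embedding of the $16$-vertex prism $\Gamma(\Z_4\times\Z_2)\,\square\,K_2$: this base graph is planar but not outerplanar (the trivial subgroup and the $\Z_2\times\Z_2$ subgroup, together with the three order-$2$ subgroups, already form a $K_{2,3}$), so its prism need not be planar and one must verify that a single handle suffices. I expect the cleanest route is a carefully labeled figure backed by the $14$-face verification, mirroring the treatment of $\Z_8\times\Z_4$.
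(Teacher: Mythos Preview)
Your plan is correct and mirrors the paper's proof almost exactly: the paper also dispatches six of the seven families by specializing Propositions~\ref{prop:Zp2_Zp2}, \ref{prop:Zp3_Zp2}, and \ref{prop:Zp_Zp_Zq}, invokes Theorem~\ref{theorem:genus_zero} for the lower bound on $\Gamma(\Z_4\times\Z_2\times\Z_q)$, and then finishes with an explicit toroidal drawing (Figure~\ref{fig:Z4_Z2_Zq_torus}). Your identification of $\Gamma(\Z_4\times\Z_2\times\Z_q)$ with the prism $\Gamma(\Z_4\times\Z_2)\,\square\,K_2$ and the $|V|=16$, $|E|=30$, $14$-face count are a helpful sanity check the paper does not spell out, but the substantive step---actually producing the embedding---is handled the same way in both.
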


\begin{proof}
All the groups listed in this proposition, with the exception of $\Z_4\times \Z_2\times \Z_q$, have already been shown to have subgroup lattice graphs of genus one via Propositions \ref{prop:Zp2_Zp2}, \ref{prop:Zp3_Zp2}, and \ref{prop:Zp_Zp_Zq}. Thus all that remains is to show the same result for $\Z_4\times \Z_2\times \Z_q$. Since it does not appear in the list of planar groups in Theorem \ref{theorem:genus_zero}, it must have genus at least one. So all we must do is show that its subgroup lattice graph can be drawn on the $1$-torus

The subgroup lattice of $\Z_4\times \Z_2\times \Z_q$ is below in Figure \ref{fig:Z4_Z2_Zq}. The blue vertices represent the three cyclic subgroups of size $2$ and the blue vertex outlined in black is the subgroup generated by $(2,0,0)$. The red vertex is the unique non-cyclic subgroup of size $4$. The green vertices represent the three cyclic subgroups of size $2q$ and the green vertex outlined in black is the subgroup generated by $(2,0,1)$. The orange vertex is the unique non-cyclic subgroup of size $4q$.

\begin{figure}[!h]
    \includegraphics[width=0.25\linewidth]{"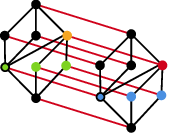"}
    \caption{Subgroup Lattice Graph of $\Z_4\times \Z_2\times \Z_q$}
    \label{fig:Z4_Z2_Zq}
\end{figure}

Figure \ref{fig:Z4_Z2_Zq_torus} shows how $\Gamma(\Z_4\times \Z_2\times \Z_q)$ can be drawn on the $1$-torus.

\begin{figure}[!h]
    \includegraphics[width=0.35\linewidth]{"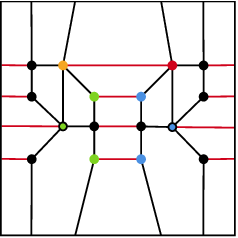"}
    \caption{Subgroup Lattice Graph of $\Z_4\times \Z_2\times \Z_q$ Drawn on the Torus}
    \label{fig:Z4_Z2_Zq_torus}
\end{figure}

\end{proof}

\subsection{Eliminating Graphs}
In this subsection, we will determine why various families of graphs have genus at least two and thus cannot be embedded into the 1-torus. First, recall that the {\bf girth} of a graph is the length of its shortest cycle. Now, we recall the following result. 

\begin{proposition}[Proposition 1, \cite{MS22}]
    Suppose $\Gamma$ is a connected graph whose girth is at least $4$. Then $\gamma(\Gamma)\ge 1+\frac{|E|}{4}-\frac{|V|}{2}$.
\end{proposition}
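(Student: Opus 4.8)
The plan is to take a minimum-genus embedding of $\Gamma$ and play the Euler characteristic off against the girth hypothesis. Fix an embedding of $\Gamma$ into the surface $S_g$ that realizes $g=\gamma(\Gamma)$. A classical fact (due to Youngs) is that a minimum-genus embedding of a connected graph is cellular, so every face is an open disk and Euler's formula
\[
|V|-|E|+F = 2-2g
\]
holds, where $F$ denotes the number of faces. This reduces the problem to bounding $F$ from above.

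Next I would bound $F$ using the girth. In a cellular embedding each edge is incident to the face-boundary walks with total multiplicity two, so $\sum_{f}\ell(f)=2|E|$, where $\ell(f)$ is the length of the boundary walk of the face $f$. The key claim is that $\ell(f)\ge 4$ for every face $f$. Indeed, a boundary walk of length $1$ is a loop, a boundary walk of length $3$ is a triangle, and a boundary walk of length $2$ is either a pair of parallel edges or, if it re-traverses a single edge, forces $\Gamma$ to be a single edge; each of these possibilities is excluded by the hypothesis that the girth is at least $4$. (In particular this hypothesis presupposes that $\Gamma$ contains a cycle, so $\Gamma$ is neither a tree nor a single edge.) Granting the claim, $2|E|=\sum_{f}\ell(f)\ge 4F$, that is, $F\le |E|/2$.

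Finally, substituting $F\le |E|/2$ into Euler's formula gives
\[
2-2g = |V|-|E|+F \le |V|-\tfrac{|E|}{2},
\]
which rearranges to $g\ge 1+\tfrac{|E|}{4}-\tfrac{|V|}{2}$. Since $g=\gamma(\Gamma)$, this is exactly the asserted inequality.

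The step I expect to be the main obstacle is justifying $\ell(f)\ge 4$ for every face, because a face-boundary walk need not be a simple cycle: it may traverse a bridge or a pendant edge twice, so one cannot simply say ``each face is bounded by a cycle of length at least the girth.'' The care required is to verify that such repeated traversals still force $\ell(f)\ge 4$ unless a short cycle is present, by checking that a length-$2$ face can arise only from the degenerate single-edge graph and a length-$3$ face only from a triangle. An alternative way to dispatch this obstacle is to first reduce to the bridgeless case and invoke additivity of genus over blocks (Theorem \ref{theorem:block}), since bridges form blocks of genus $0$ and hence do not affect $\gamma(\Gamma)$.
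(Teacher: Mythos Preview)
The paper does not actually prove this proposition; it is quoted verbatim from \cite{MS22} and used as a black box, so there is no ``paper's own proof'' to compare against. Your argument is exactly the standard derivation---minimum-genus (hence cellular) embedding, Euler's formula, and the handshaking identity $\sum_f \ell(f)=2|E|$ together with $\ell(f)\ge 4$---and it is correct. Your case analysis ruling out faces of length $1$, $2$, and $3$ is the right way to handle the worry that face boundaries need not be simple cycles; in particular your observation that a length-$2$ face forces both endpoints to have degree~$1$ (hence $\Gamma$ is a single edge) is accurate.

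One small remark: you read ``girth at least $4$'' as implying the existence of a cycle, which is what lets you dismiss the single-edge graph. Under the common convention that acyclic graphs have infinite girth, trees would also satisfy the hypothesis. This does not actually threaten the inequality except in the degenerate cases $|V|\le 2$: for any tree with $|V|\ge 3$ one has $1+\frac{|E|}{4}-\frac{|V|}{2}=\frac{3-|V|}{4}\le 0=\gamma(\Gamma)$. Since the proposition is only ever applied in the paper to graphs that visibly contain $4$-cycles, this wrinkle is harmless, but if you want the statement to hold literally for all connected graphs of girth $\ge 4$ you should either exclude $|V|\le 2$ or adopt your stricter reading of the girth hypothesis.
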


\begin{corollary}\label{corollary:genus}
    If $G$ is a finite group whose subgroup lattice is not a tree then 
    \begin{equation}\label{eq:genus_lower_bound_2}
    \gamma(\Gamma(G))\ge 1+\frac{|E|}{4}-\frac{|V|}{2}.
    \end{equation}
\end{corollary}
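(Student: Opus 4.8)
The plan is to derive the corollary directly from the preceding proposition by checking its two hypotheses for $\Gamma=\Gamma(G)$: that the graph is connected and that its girth is at least $4$. First I would record that $\Gamma(G)$ is always connected. The subgroup lattice of a finite group has a least element $\langle 1\rangle$ and a greatest element $G$, and from any subgroup one may ascend step by step through covering relations until reaching $G$; hence every vertex is joined to the top vertex by a path, so the whole graph is connected.

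The key step is to establish the girth condition whenever $\Gamma(G)$ is not a tree. Because $\Gamma(G)$ is the Hasse diagram of the subgroup lattice, I would argue that it contains no triangle. Suppose vertices $A,B,C$ formed a $3$-cycle. Every edge of a Hasse diagram joins two comparable subgroups, so each of the pairs $\{A,B\}$, $\{B,C\}$, $\{A,C\}$ would be comparable. Three pairwise comparable elements of a poset form a chain, so after relabeling we may assume $A<B<C$; but then the edge joining the endpoints $A$ and $C$ would be a covering relation, contradicting the fact that $B$ lies strictly between $A$ and $C$. Thus $\Gamma(G)$ has no cycle of length $3$.

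Finally I would assemble these observations. A connected graph that is not a tree must contain a cycle, and by the previous step no such cycle can be a triangle; hence $\Gamma(G)$ is a connected graph of girth at least $4$. Applying the preceding proposition of \cite{MS22} to $\Gamma=\Gamma(G)$ then gives precisely
\[
\gamma(\Gamma(G))\ge 1+\frac{|E|}{4}-\frac{|V|}{2},
\]
where $|E|$ and $|V|$ denote the number of edges and vertices of $\Gamma(G)$.

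The only point requiring care is the triangle-free argument, and specifically the reduction showing that three pairwise comparable elements of a poset must form a chain (using antisymmetry and transitivity), which then forces one of the three edges to violate the covering relation. Everything else—connectedness and the passage from ``not a tree'' to ``contains a cycle''—is immediate, so I expect this comparability argument to be the main (indeed the only) obstacle.
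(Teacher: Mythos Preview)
Your proposal is correct and follows essentially the same approach as the paper: both establish that $\Gamma(G)$ is triangle-free via the observation that three pairwise comparable subgroups must form a chain, contradicting the covering relation on the longest edge, and then invoke the girth-$4$ proposition from \cite{MS22}. Your write-up is in fact slightly more careful than the paper's, since you explicitly verify connectedness (which the cited proposition requires) and explain why the ``not a tree'' hypothesis ensures a cycle exists; the paper leaves these points implicit.
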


\begin{remark}
    The only abelian groups whose subgroup lattice is a tree are $\Z_{p^k}$.
\end{remark}

\begin{proof}
    We proceed by showing that a subgroup lattice graph has girth at least $4$. Clearly a subgroup lattice cannot have a cycle of length two since there is at most one edge between any two vertices. Could there be three subgroups $S_1,S_2,S_3$ that form the vertices of a $3$-cycle in the subgroup lattice? If so, one deduces that one subgroup, say $S_1$, is a subgroup of the other two. Furthermore, one of the remaining subgroups, say $S_2$, is a subgroup of the last subgroup, $S_3$. Hence $S_1<S_2<S_3$. This is a contradiction with the fact that $S_1$ and $S_3$ are connected by an edge in the subgroup lattice graph. Hence, we conclude that a $3$-cycle is impossible in a subgroup lattice graph.
\end{proof}

\begin{lemma}\label{lemma:Zp_Zp_Zp}
    If $p$ is a prime, then $\gamma(\Gamma(\Z_p\times \Z_p\times \Z_p))\ge 2$.
\end{lemma}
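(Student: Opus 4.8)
The plan is to apply Corollary \ref{corollary:genus} directly, which reduces the statement to counting the vertices and edges of $\Gamma(\Z_p\times\Z_p\times\Z_p)$. Since the subgroup lattice of $\Z_p\times\Z_p\times\Z_p$ is not a tree (by the Remark following Corollary \ref{corollary:genus}, the only abelian groups with tree subgroup lattices are the $\Z_{p^k}$), the corollary applies and gives a genuine lower bound once the two counts are in hand.

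First I would identify the subgroups with subspaces of the $3$-dimensional $\F_p$-vector space $\Z_p\times\Z_p\times\Z_p$, organized into four layers: the trivial subgroup; the $1$-dimensional subspaces (the subgroups of order $p$), of which there are $\tfrac{p^3-1}{p-1}=p^2+p+1$; the $2$-dimensional subspaces (the subgroups of order $p^2$), of which there are again $p^2+p+1$ by duality; and the full group. Summing these layers yields $|V|=2p^2+2p+4$.

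For the edges I would read off the covering relations in the subspace lattice. The trivial subgroup is covered by each of the $p^2+p+1$ lines; each line $L$ is contained in exactly $p+1$ planes, namely the number of $1$-dimensional subspaces of the quotient $\F_p^3/L\cong\F_p^2$; and each plane is covered only by the full group. Hence
$$
|E|=(p^2+p+1)\bigl(1+(p+1)+1\bigr)=(p^2+p+1)(p+3).
$$
Substituting these into the inequality of Corollary \ref{corollary:genus} and simplifying, I expect the messy polynomial to collapse cleanly: the two middle terms cancel the bulk of the product, leaving
$$
\gamma(\Gamma(G))\ge 1+\frac{|E|}{4}-\frac{|V|}{2}=\frac{p^3-1}{4}.
$$
Because $\tfrac{p^3-1}{4}\ge\tfrac{7}{4}>1$ for every prime $p$ and the genus is an integer, I conclude $\gamma(\Gamma(G))\ge 2$, which is the claim (and in fact the bound is far stronger for $p\ge 3$).

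There is no real obstacle in this argument; the only step requiring a moment of care is the edge count between the two middle layers, which rests on the standard projective-geometry fact that every point of $\mathbb{P}^2(\F_p)$ lies on exactly $p+1$ lines. Everything else is bookkeeping followed by the algebraic simplification to $\tfrac{p^3-1}{4}$.
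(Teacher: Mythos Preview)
Your proposal is correct and follows essentially the same route as the paper: both apply Corollary~\ref{corollary:genus} after counting $|V|=2p^2+2p+4$ and $|E|=(p^2+p+1)(p+3)=p^3+4p^2+4p+3$, obtaining the lower bound $\tfrac{p^3-1}{4}>1$. The only cosmetic difference is that you phrase the middle-layer edge count in the language of projective geometry (lines through a point in $\mathbb{P}^2(\F_p)$), whereas the paper counts elements of order $p$ inside a subgroup of order $p^2$; these are the same computation.
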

\begin{proof}
There are $p^2+p+1$ subgroups of size $p$ and $p^2+p+1$ subgroups of size $p^2$. Since a subgroup of size $p^2$ contains $p^2-1$ elements of order $p$, it thus contains $\frac{p^2-1}{p-1}=p+1$ subgroups of size $p$. Thus, each subgroup of size $p^2$ connects to $p+1$ subgroups of size $p$.

Using \eqref{eq:genus_lower_bound_2}, the subgroup lattice graph has $2(p^2+p+1)+2=2p^2+2p+4$ vertices and $2(p^2+p+1)+(p+1)(p^2+p+1)=p^3+4p^2+4p+3$ edges. Since the girth of the graph is $4$, 

$$
\gamma(\Gamma(\Z_p\times \Z_p\times \Z_p))\ge 1+\frac{p^3+4p^2+4p+3}{4}-\frac{2p^2+2p+4}{2}=\frac{1}{4}(p^3-1).
$$
This is strictly bigger than $1$ since $p\ge 2$.
\end{proof}

\begin{lemma}\label{lemma:eliminated_groups}
The subgroup lattices of the groups below have genus at least $2$.
\begin{itemize}
    \item $\Z_2\times \Z_2\times \Z_3\times \Z_3$
    \item $\Z_4\times \Z_4\times \Z_q$ for a prime $q\ne 2$
    \item $\Z_2\times \Z_2\times \Z_{q_1}\times \Z_{q_2}$ for distinct primes $q_1,q_2\ne 2$
    \item $\Z_3\times \Z_3\times \Z_{q_1}\times \Z_{q_2}$ for distinct primes $q_1,q_2\ne 3$
   
\end{itemize}

\end{lemma}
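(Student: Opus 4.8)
The plan is to apply the girth-based lower bound of Corollary~\ref{corollary:genus} to each of the four groups, so that the entire task reduces to an accurate count of the vertices and edges of each subgroup lattice graph. The key structural observation I would use is that when an abelian group splits as a direct product $G_1 \times G_2$ of subgroups of coprime order, every subgroup is a product $H_1 \times H_2$ with $H_i \le G_i$; since in an abelian group a covering relation is exactly a containment of prime index, and $[H_1'\times H_2':H_1\times H_2]=[H_1':H_1][H_2':H_2]$, a cover must change exactly one factor by a single prime-index step. Hence $\Gamma(G)$ is the Cartesian product of the two Hasse diagrams, which I will write $\Gamma(G_1)\mathbin{\square}\Gamma(G_2)$, and the counting is governed by $|V(\Gamma_1\mathbin{\square}\Gamma_2)|=|V_1||V_2|$ and $|E(\Gamma_1\mathbin{\square}\Gamma_2)|=|V_1||E_2|+|V_2||E_1|$. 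Each of the four lattices is non-cyclic with a genuine cycle, so it is not a tree and Corollary~\ref{corollary:genus} applies.

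The two small building blocks I need are $\Gamma(\Z_p\times\Z_p)$, which is the complete bipartite graph $K_{2,p+1}$ (the trivial and full subgroups against the $p+1$ subgroups of order $p$) with $p+3$ vertices and $2(p+1)$ edges, and $\Gamma(\Z_q)=K_2$. I would also need the counts for $\Gamma(\Z_4\times\Z_4)$: its $15$ vertices are the trivial subgroup, the $3$ subgroups of order $2$, the $6$ cyclic and $1$ noncyclic subgroups of order $4$, the $3$ subgroups of order $8$, and the full group, and a count of covering relations—noting in particular that the noncyclic order-$4$ subgroup covers all three order-$2$ subgroups while each cyclic order-$4$ subgroup covers exactly one—gives $24$ edges.

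Feeding these into the product formulas handles all four cases. For $\Z_2\times\Z_2\times\Z_3\times\Z_3=(\Z_2\times\Z_2)\times(\Z_3\times\Z_3)$ I get $K_{2,3}\mathbin{\square}K_{2,4}$ with $30$ vertices and $76$ edges, so Corollary~\ref{corollary:genus} yields $\gamma\ge 1+\tfrac{76}{4}-\tfrac{30}{2}=5$. For $\Z_4\times\Z_4\times\Z_q=\Gamma(\Z_4\times\Z_4)\mathbin{\square}K_2$ I get $30$ vertices and $63$ edges, giving $\gamma\ge 1+\tfrac{63}{4}-15=\tfrac{7}{4}$, hence $\gamma\ge 2$. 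For $\Z_2\times\Z_2\times\Z_{q_1}\times\Z_{q_2}=(\Z_2\times\Z_2)\mathbin{\square}K_2\mathbin{\square}K_2$ I get $20$ vertices and $44$ edges, giving $\gamma\ge 1+11-10=2$. Finally, for $\Z_3\times\Z_3\times\Z_{q_1}\times\Z_{q_2}$ I get $24$ vertices and $56$ edges, giving $\gamma\ge 1+14-12=3$. In every case the bound exceeds $1$, so each lattice has genus at least $2$.

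The main obstacle is accuracy of the bookkeeping rather than any conceptual difficulty, and it is concentrated in the $\Z_4\times\Z_4\times\Z_q$ case: its bound is $\tfrac{7}{4}$, only barely above $1$, so an error of even a few edges in the count for $\Gamma(\Z_4\times\Z_4)$ would invalidate the conclusion. I would therefore verify the edge count of $\Gamma(\Z_4\times\Z_4)$ twice—directly from the containment structure, and via the self-duality of the lattice of $\Z_{p^2}\times\Z_{p^2}$, which forces the number of covers between the order-$p^2$ and order-$p^3$ levels to equal the number between the order-$p$ and order-$p^2$ levels—before trusting the final inequality. The remaining three cases carry bounds comfortably at or above $2$ and are robust to small miscounts.
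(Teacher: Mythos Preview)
Your proposal is correct and follows exactly the paper's approach: apply the girth-$4$ lower bound $\gamma(\Gamma(G))\ge 1+\tfrac{|E|}{4}-\tfrac{|V|}{2}$ of Corollary~\ref{corollary:genus} to each of the four groups, obtaining the same vertex and edge counts ($30/76$, $30/63$, $20/44$, $24/56$) and hence the same bounds $5$, $\tfrac{7}{4}$, $2$, $3$. The only difference is presentational: the paper simply asserts the vertex and edge counts, whereas you derive them via the Cartesian-product decomposition $\Gamma(G_1\times G_2)\cong\Gamma(G_1)\mathbin{\square}\Gamma(G_2)$ for coprime factors, which is a cleaner and more transparent way to do the bookkeeping.
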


\begin{proof}
The graph $\Gamma(\Z_2\times \Z_2\times \Z_3\times \Z_3)$ has 30 vertices and 76 edges. Using \eqref{eq:genus_lower_bound_2}, one computes that $\gamma(\Gamma(\Z_2\times \Z_2\times \Z_3\times \Z_3))\ge 1+\frac{76}{4}-\frac{30}{2}=5.$

For a prime $q\ne 2$, the graph $\Gamma(\Z_4\times \Z_4\times \Z_q)$ has 30 vertices and 63 edges. Using \eqref{eq:genus_lower_bound_2}, one computes that $\gamma(\Gamma(\Z_4\times \Z_4\times \Z_q))\ge 1+\frac{63}{4}-\frac{30}{2}=1.75.$

For distinct primes $q_1,q_2\ne 2$, the graph $\Gamma(\Z_2\times \Z_2\times \Z_{q_1}\times \Z_{q_2})$ has 20 vertices and 44 edges. Using \eqref{eq:genus_lower_bound_2}, one computes that $\gamma(\Gamma(\Z_2\times \Z_2\times \Z_{q_1}\times \Z_{q_2}))\ge 1+\frac{44}{4}-\frac{20}{2}=2.$

For distinct primes $q_1,q_2\ne 3$, the graph $\Gamma(\Z_3\times \Z_3\times \Z_{q_1}\times \Z_{q_2})$ has 24 vertices and 56 edges. Using \eqref{eq:genus_lower_bound_2}, one computes that $\gamma(\Gamma(\Z_3\times \Z_3\times \Z_{q_1}\times \Z_{q_2}))\ge 1+\frac{56}{4}-\frac{24}{2}=3.$
\end{proof}

\begin{proposition}\label{prop:eliminated_groups}
    The following groups have subgroup lattice graphs with genus at least $2$.
    \begin{itemize}
        \item For a prime $q\ne 2$, $\Z_8\times \Z_2\times \Z_q$
        \item For a prime $q\ne 3$, $\Z_9\times \Z_3\times \Z_q$
        \item $\Z_{16}\times \Z_4$
        \item $\Z_8\times \Z_8$
         \item For a prime $q\ne 2$, $\Z_2\times\Z_2\times\Z_{q^2}$ and $\Z_4\times\Z_2\times\Z_{q^2}$
    \end{itemize}
\end{proposition}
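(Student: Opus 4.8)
\section*{Proof proposal}

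The plan is to treat each family by first reducing to a graph whose vertex and edge counts I can write down explicitly, and then to split into two regimes according to whether the girth-four bound \eqref{eq:genus_lower_bound_2} of Corollary \ref{corollary:genus} already forces genus at least two. For the four groups carrying a coprime cyclic factor, namely $\Z_8\times\Z_2\times\Z_q$, $\Z_9\times\Z_3\times\Z_q$, $\Z_2\times\Z_2\times\Z_{q^2}$, and $\Z_4\times\Z_2\times\Z_{q^2}$, I would first observe that, since the two parts have coprime order, every subgroup is the internal direct product of a $p$-subgroup with a subgroup of the cyclic part. Consequently the subgroup lattice graph is the Cartesian product $\Gamma(P)\,\square\,P_{k+1}$, where $P$ is the Sylow $p$-subgroup and $P_{k+1}$ is the path on $k+1$ vertices coming from the chain of subgroups of $\Z_{q^k}$. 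This gives the closed forms $|V|=(k+1)|V(\Gamma(P))|$ and $|E|=(k+1)|E(\Gamma(P))|+k|V(\Gamma(P))|$, and the edge count of $\Gamma(P)$ itself I would obtain from the fact that the number of covering relations in a $p$-group equals $\sum_{H\le P}\frac{p^{r(H)}-1}{p-1}$, summed over subgroups $H$ of rank $r(H)$. Feeding these into \eqref{eq:genus_lower_bound_2} disposes immediately of $\Z_4\times\Z_2\times\Z_{q^2}$, for which the bound exceeds one.

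The difficulty is that for every other group in the list the girth-four bound is too weak: for $\Z_8\times\Z_2\times\Z_q$, $\Z_9\times\Z_3\times\Z_q$, and $\Z_2\times\Z_2\times\Z_{q^2}$ it returns a value that is at most one, and for the two-generated $2$-groups $\Z_{16}\times\Z_4$ and $\Z_8\times\Z_8$ it is even negative, because these lattices are comparatively sparse. For these I would instead exhibit an explicit minor of genus at least two. The cleanest route, mirroring the $K_{3,3}$-block argument used for $\Z_{p_1^3p_2^3p_3^2}$ in Theorem \ref{theorem:cyclic_genus}, is to locate two copies of $K_{3,3}$ as minors meeting in at most a single vertex and then to invoke the block additivity of Theorem \ref{theorem:block} together with $\gamma(K_{3,3})=1$. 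For $\Z_8\times\Z_8$ and $\Z_{16}\times\Z_4$ the self-duality of the subgroup lattice makes this natural: one $K_{3,3}$ minor should be carved out among the subgroups of small order and its mirror image among the subgroups of large order, with the branch sets chosen disjoint except possibly at one subgroup of middle order. For the coprime-product cases I would instead hunt for a single complete bipartite minor of genus two, such as $K_{4,5}$ or $K_{3,7}$, obtained by contracting the vertical path-edges joining consecutive layers together with the extremal ``$0$'' and ``whole'' columns, thereby extending the $K_{4,p+1}$ contraction already exploited in Proposition \ref{prop:Zp_Zp_Zq}.

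The main obstacle is precisely this minor construction. Since \eqref{eq:genus_lower_bound_2} settles only $\Z_4\times\Z_2\times\Z_{q^2}$, for the remaining five families one must produce, and then rigorously verify, genus-two minors essentially by hand, identifying the branch sets inside each lattice and checking every required adjacency; this is the step where the figures become indispensable. I expect the balanced $2$-group $\Z_8\times\Z_8$, whose lattice is the largest and most symmetric, together with $\Z_2\times\Z_2\times\Z_{q^2}$, whose vertex count leaves almost no room for two disjoint $K_{3,3}$ minors, to be the most delicate cases.
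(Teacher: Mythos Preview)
Your overall plan---try the girth bound first, then produce genus-two minors where it fails---is exactly right, and your diagnosis of which groups the bound \eqref{eq:genus_lower_bound_2} handles is accurate: it settles only $\Z_4\times\Z_2\times\Z_{q^2}$ (where you get $1+\tfrac{49}{4}-12>1$). The paper instead deduces that case from the subgraph inclusion $\Gamma(\Z_2\times\Z_2\times\Z_{q^2})\subset\Gamma(\Z_4\times\Z_2\times\Z_{q^2})$, so your route there is a legitimate, slightly more direct alternative.

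The divergence that matters is in the coprime-product cases $\Z_8\times\Z_2\times\Z_q$, $\Z_9\times\Z_3\times\Z_q$, and $\Z_2\times\Z_2\times\Z_{q^2}$. You propose to extend the $K_{4,p+1}$ contraction of Proposition~\ref{prop:Zp_Zp_Zq} to a single $K_{4,5}$ or $K_{3,7}$ minor. This is risky: for $\Z_2\times\Z_2\times\Z_{q^2}$ the natural extension of that contraction (merge the three ``middle'' columns through all layers, keep the six top/bottom hubs) yields only $K_{6,3}$, which has genus~$1$---this is precisely the content of Lemma~\ref{lemma:Zp_Zp_Zq2} at $p=2$, and it is why that lemma does not already cover this group. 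Finding a genuinely larger bipartite minor in a $15$-vertex lattice is not obviously possible. The paper avoids this by using the \emph{same} minor uniformly in all five hard cases: two copies of $K_{3,3}$ sharing exactly one vertex, then Theorem~\ref{theorem:block}. So the two-$K_{3,3}$ trick you correctly propose for $\Z_{16}\times\Z_4$ and $\Z_8\times\Z_8$ is in fact the right tool for the coprime-product groups as well, and your worry that $\Gamma(\Z_2\times\Z_2\times\Z_{q^2})$ ``leaves almost no room for two disjoint $K_{3,3}$ minors'' is unfounded---each of the two adjacent layer-pairs carries a copy of $\Gamma(\Z_2\times\Z_2\times\Z_q)$, and one can arrange the two $K_{3,3}$ minors to meet in a single middle-layer vertex. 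Redirecting your coprime-product argument toward that minor, rather than toward a single large $K_{m,n}$, would close the gap.
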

\begin{proof}
To prove this proposition in every case except $\Z_4\times\Z_2\times\Z_{q^2}$, we will display minors with genus $2$ of the subgroup lattice graphs of the groups listed above. The graphs with the identified minors are displayed below in Figures \ref{fig:Z8_Z2_Zq}, \ref{fig:Z9_Z3_Zq}, \ref{fig:Z16_Z4}, \ref{fig:Z8_Z8}, and \ref{fig:Z2_Z2_Zp2}.

\begin{figure}[!h]
    \includegraphics[width=0.35\linewidth]{"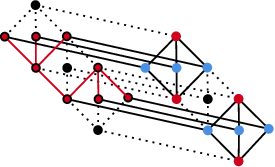"}
    \caption{Subgroup Lattice Graph of $\Z_8\times \Z_2\times \Z_q$ for a Prime $q\ne 2$}
    \label{fig:Z8_Z2_Zq}
\end{figure}

\begin{figure}[!h]
    \includegraphics[width=0.5\linewidth]{"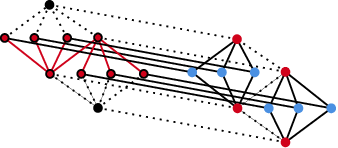"}
    \caption{Subgroup Lattice Graph of $\Z_9\times \Z_3\times \Z_q$ for a Prime $q\ne 3$}
    \label{fig:Z9_Z3_Zq}
\end{figure}

\begin{figure}[!h]
    \includegraphics[width=0.35\linewidth]{"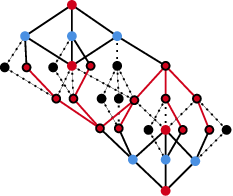"}
    \caption{Subgroup Lattice Graph of $\Z_{16}\times \Z_{4}$}
    \label{fig:Z16_Z4}
\end{figure}

\begin{figure}[!h]
    \includegraphics[width=0.35\linewidth]{"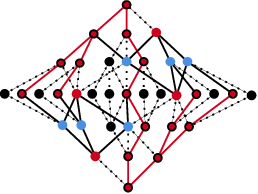"}
    \caption{Subgroup Lattice Graph of $\Z_8\times \Z_8$}
    \label{fig:Z8_Z8}
\end{figure}

\begin{figure}[!h]
    \includegraphics[width=0.35\linewidth]{"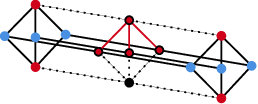"}
    \caption{Subgroup Lattice Graph of $\Z_2\times \Z_2\times \Z_{q^2}$ for a Prime $q\ne 2$}
    \label{fig:Z2_Z2_Zp2}
\end{figure}

In all of the subgroup lattice graph diagrams in Figures \ref{fig:Z8_Z2_Zq}, \ref{fig:Z9_Z3_Zq}, \ref{fig:Z16_Z4}, \ref{fig:Z8_Z8}, and \ref{fig:Z2_Z2_Zp2}, merging all the red vertices with a black outline and deleting all the dotted edges results in the following graph minor.

\begin{center}
\includegraphics[width=0.25\linewidth]{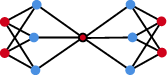}
\end{center}

By Theorem \ref{theorem:block} and the fact that $\gamma(K_{3,3})=1$, we have shown that each of the above graphs have genus at least $2$ except in the case of $\Z_4\times\Z_2\times\Z_{q^2}$. This result follows immediately from the fact that $\Gamma(\Z_2\times\Z_2\times\Z_{q^2})$ is a subgraph of $\Gamma(\Z_4\times\Z_2\times\Z_{q^2})$.
\end{proof}

\begin{lemma}\label{lemma:Zp_Zp_Zq2}
The genus of $\Gamma(\Z_p\times\Z_p\times \Z_{q^2})$ for primes $p\ne q$ is greater than or equal to $p-1$.
\end{lemma}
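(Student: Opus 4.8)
The plan is to realize $\Gamma(\Z_p\times\Z_p\times\Z_{q^2})$ explicitly as a product lattice and then extract a large complete bipartite minor. Since $p\ne q$, every subgroup factors as $A\times B$ with $A\le \Z_p\times\Z_p$ and $B\le\Z_{q^2}$, so the lattice is the product of the lattice of $\Z_p\times\Z_p$ (bottom $0$, the $p+1$ atoms $P_0,\dots,P_p$ of order $p$, and top $T$) with the chain $0<Q<R$ of $\Z_{q^2}$. I would organize the vertices into $p+1$ vertical paths (''columns'') $(P_i,0)\,\text{--}\,(P_i,Q)\,\text{--}\,(P_i,R)$ together with the six ''spine'' subgroups $(0,0),(0,Q),(0,R),(T,0),(T,Q),(T,R)$. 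A preliminary remark worth recording is that the Euler/girth bound of Corollary \ref{corollary:genus} is \emph{not} strong enough here: a direct count gives $|V|=3p+9$ and $|E|=8p+12$, so that bound yields only $\gamma\ge\frac{p-1}{2}$, which is off by a factor of two from the claimed $p-1$. This is exactly why a minor argument is required rather than the counting argument used in Lemmas \ref{lemma:Zp_Zp_Zp} and \ref{lemma:eliminated_groups}.

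The key step is to contract, for each $i\in\{0,\dots,p\}$, the two edges $(P_i,0)\,\text{--}\,(P_i,Q)$ and $(P_i,Q)\,\text{--}\,(P_i,R)$, collapsing the $i$-th column to a single vertex $x_i$. After contraction each $x_i$ inherits all the spine neighbors of its three original vertices, so $x_i$ is adjacent to every one of the six spine vertices. Because distinct atoms $P_i$ and $P_j$ are incomparable and all comparabilities between them pass through $0$ or $T$, there are no edges between different columns; hence the $x_i$ are pairwise nonadjacent and the contractions create no multi-edges. Deleting the remaining spine edges then leaves the bipartite graph with parts $\{x_0,\dots,x_p\}$ and the six spine vertices, in which every $x_i$ is joined to all six: this is exactly $K_{6,p+1}$. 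Since
\[
\gamma(K_{6,p+1})=\left\lceil\frac{(6-2)(p+1-2)}{4}\right\rceil=\left\lceil\frac{4(p-1)}{4}\right\rceil=p-1,
\]
and genus is monotone under taking minors, I conclude $\gamma(\Gamma(\Z_p\times\Z_p\times\Z_{q^2}))\ge p-1$.

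The main obstacle is purely the bookkeeping of covering relations needed to justify the minor: I must verify that each of $(P_i,0),(P_i,Q),(P_i,R)$ covers/is covered by precisely the claimed spine subgroups (each adjacency being an index-$p$ or index-$q$ containment with no intermediate subgroup, which holds because each such index is prime and the atoms $P_i$ are maximal in $\Z_p\times\Z_p$), and that the two vertical edges in each column are genuine covering relations. Once the product structure is made explicit these checks are routine, and the conceptual content is simply that enlarging the last factor from $\Z_q$ to $\Z_{q^2}$ lengthens each column from two vertices to three, upgrading the $K_{4,p+1}$ minor of Proposition \ref{prop:Zp_Zp_Zq} to a $K_{6,p+1}$ minor and thereby doubling the genus lower bound.
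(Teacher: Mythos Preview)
Your proof is correct and follows essentially the same approach as the paper: both arguments contract each of the $p+1$ three-vertex ``columns'' $(P_i,0)\text{--}(P_i,Q)\text{--}(P_i,R)$ to a single vertex and observe that the resulting graph contains $K_{6,p+1}$ on the six spine vertices, whence the bound $\gamma\ge p-1$. Your write-up is more explicit than the paper's (which relies on a figure) in verifying the covering relations and in noting that the Euler/girth inequality only gives $\frac{p-1}{2}$, but the underlying idea is identical.
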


\begin{proof}
The subgroup lattice of $\Z_p\times \Z_p\times \Z_{q^2}$ is pictured below.

\begin{figure}[!h]
    \includegraphics[width=0.45\linewidth]{"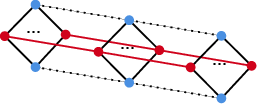"}
    \caption{Subgroup Lattice Graph of $\Z_p\times \Z_p\times \Z_{q^2}$}
\end{figure}
The vertices between each pair of red vertices represent $p+1$ subgroups, and there are $3p+3$ red vertices total. If we merge the red vertices in groups of three as shown in the picture and remove the dotted edges, we obtain a $K_{6,p+1}$ graph which has genus $\lceil \frac{(6-2)(p+1-2)}{4}\rceil=p-1$.
    
\end{proof}

\subsection{Proof of Theorem \ref{theorem:main}}
Now, we can proceed with the proof of the main theorem.

\begin{proof}[Proof of Theorem \ref{theorem:main}]
    Proposition \ref{prop:cyclic_genus_one} classifies all cyclic groups with subgroup lattice graphs of genus $1$ which are $\Z_{p_1^2p_2^2p_3}$, $\Z_{p_1^3p_2^2p_3}$, $\Z_{p_1^3p_2^3p_3}$ and $\Z_{p_1p_2p_3p_4}$ where the $p_i$ are distinct primes. 

    We now turn to the non-cyclic abelian groups. Proposition \ref{prop:non_cyclic_genus_one} demonstrates that every non-cyclic abelian group listed in Theorem \ref{theorem:main} has genus one. 
    
    Now, we must demonstrate that every non-cyclic abelian group not appearing in Theorem \ref{theorem:main} or Theorem \ref{theorem:genus_zero} has a subgroup lattice with genus at least two. 

    Let 
    \begin{equation}\label{eq:sum}
    G=\bigoplus_{i=1}^n{\Z_{p_i^{k_i}}}
    \end{equation}
    be a non-cyclic abelian group with a subgroup lattice group with genus $1$ \underline{that does not} appear in the list of Theorem \ref{theorem:main} or Theorem \ref{theorem:genus_zero}. Since $G$ is not cyclic, $p_i=p_j$ for some $i\ne j$ in \eqref{eq:sum}.

    First, let's consider the case where all the $p_i$ are the same in \eqref{eq:sum}. Lemma \ref{lemma:Zp_Zp_Zp} shows that $n=2$ necessarily. Because of Theorem \ref{theorem:genus_zero}, we only need to investigate the case $k_1,k_2\ge 2$. Proposition \ref{prop:Zp2_Zp2} shows us that $\gamma(\Gamma(\Z_{p^2}\times \Z_{p^2}))\ge 2$ when $p\ge 7$. Now, we need only consider the case where $p=2$, $3$, or $5$ and $k_1\ge 3$ and $k_2\ge 2$. Proposition \ref{prop:Zp3_Zp2} shows us that $\gamma(\Gamma(\Z_{p^3}\times \Z_{p^2}))\ge 2$ when $p\ge 3.$ So, we are reduced to considering cases where $p=2$ and $k_1\ge 4$ and $k_2\ge 2$ \underline{or} $k_1\ge 3$ and $k_2\ge 3$. Proposition \ref{prop:eliminated_groups} shows that these groups have subgroup lattices with genus at least $2$. Hence, Theorem \ref{theorem:main} contains the complete classification of non-cyclic abelian group with subgroup lattice with genus equal to one in the case where all the $p_i$ are the same in \eqref{eq:sum}.

    Now, we consider the case where the collection of primes $\{p_i\}$ in \eqref{eq:sum} contains exactly two primes. Recall that at least one of the primes must be repeated in order for $G$ to be non-cyclic. Proposition \ref{prop:Zp_Zp_Zq} shows that if $\gamma(\Gamma(G))=1$, then the prime repeated must be $2$ or $3$. 
    
    Consider the case where $p_1=p_2=3$. Proposition \ref{prop:eliminated_groups} forces $k_1=k_2=1$. Since $G$ cannot appear in the list of Theorem \ref{theorem:main}, if $n=3$, then $k_3>1$ necessarily. However, Lemma \ref{lemma:Zp_Zp_Zq2} rules out this case. Hence, we restrict ourselves to the case $n=4$ which implies $p_3=p_4=2$ necessarily. However, Lemma \ref{lemma:eliminated_groups} rules out this possibility.

    Now consider the case where $p_1=p_2=2$. Proposition \ref{prop:eliminated_groups} forces either $k_1=k_2=1$ or $k_1=2,k_2=1$. Since $G$ cannot appear in the list of Theorem \ref{theorem:main}, if $n=3$, then $k_3>1$ necessarily. However, Proposition \ref{prop:eliminated_groups} rules out this case. Hence, we restrict ourselves to the case $n=4$ which implies $p_3=p_4=3$ necessarily. However, Lemma \ref{lemma:eliminated_groups} rules out this possibility.
    
    Hence, Theorem \ref{theorem:main} contains the complete classification of non-cyclic abelian group with subgroup lattice with genus equal to one in the case that collection of primes $\{ p_i\}$ in \eqref{eq:sum} contains exactly two distinct primes.

    Lastly, we consider the case where the collection of primes $\{p_i\}$ in \eqref{eq:sum} contains at least three distinct primes. Proposition \ref{prop:Zp_Zp_Zq} shows that if $\gamma(\Gamma(G))=1$, then the prime repeated must be $2$ or $3$. However, Lemma \ref{lemma:eliminated_groups} shows that $\gamma(\Gamma(G))\ge 2$ in this case since our group $G$ would contain either $\Z_2\times \Z_2\times \Z_{q_1}\times \Z_{q_2}$ or $\Z_3\times \Z_3\times \Z_{q_1}\times \Z_{q_2}$ as a subgroup.

    Hence, Theorem \ref{theorem:main} contains the complete classification of non-cyclic abelian group with subgroup lattice with genus equal to one.
\end{proof}

\section{Conclusion}
The prospect of writing out explicit lists of abelian groups whose subgroup lattice graphs have genus $g$ for every $g$ seems daunting. However, there are various classes of abelian groups for which this task seems possible. In particular, the authors of \cite{MS22} have a conjecture for the genus of the subgroup lattice graphs of cyclic groups with three distinct primes factors. In particular, they conjecture that the inequalities in Proposition \ref{prop:upper_bounds} are actually equalities. Further investigation is warranted.

\section{Acknowledgements}
All the subgroup lattice graphs were constructed using SAGE \cite{sagemath}. The author would like to thank Colin Starr for introducing him to this problem in 2017 while the author was a Visiting Assistant Professor at Willamette University.

\bibliography{main}
\bibliographystyle{plain}
\end{document}